\documentclass{article}

%

\usepackage[utf8x]{inputenc}
\usepackage{amsmath}
\usepackage{amssymb}
\usepackage{amsthm}
\usepackage{color}
\usepackage{algorithm}
\usepackage[noend]{algpseudocode}
\usepackage{pgf,tikz}
\usetikzlibrary{arrows,shapes,positioning}
\usepackage{graphicx}

\usepackage{geometry}
\geometry{hmargin=2.5cm,vmargin=2.cm}

\newtheorem{thm}{Theorem}

\begin{document}

\newcommand\inner[2]{\langle#1 ,#2 \rangle}
\newcommand\norm[1]{\mid \mid #1 \mid \mid}

\title{Representation and Characterization of Non-Stationary Processes by Dilation Operators and Induced Shape Space Manifolds}

\author{Maël Dugast, Guillaume Bouleux, Eric Marcon}
\date{}

\maketitle


\begin{abstract}
We have introduce a new vision of stochastic processes through the geometry induced by the dilation. The dilation matrices of a given processes which are obtained by a composition of rotations matrices, contain the measure information in a condensed way. Particularly interesting is the fact that the obtention of dilation matrices is regardless of the stationarity of the underlying process. When the process is stationary, it coincide with the Naimark Dilation. When the process is non-stationary, it returns a set of rotation matrices. In particular, the periodicity of the correlation function that may appear in some classes of signal is transmitted to the set of dilation matrices. These rotation matrices, which can be arbitrarily close to each other depending on the sampling or the rescaling of the signal are seen as a distinctive feature of the signal.
In order to study this sequence of matrices, and guided by the possibility to rescale the signal, the correct geometrical framework to use with the dilation's theoretic results is the space of curves on manifolds, that is the set of all curve that lies on a based manifold. To give a complete sight about the space of curve, a metric and the derived geodesic equation are provided. The general results are adapted to the more specific case where the base manifold is the Mie group of rotation matrices.
The notion of the shape of a curve can be formalized as the set of equivalence classes of curves given by the quotient space of the space of curves and the increasing diffeomorphisms. The metric in the space of curve naturally extent to the space of shapes and enable comparison between shapes.
\end{abstract}

\section{Introduction}

The Analysis and/or the representation of non-stationary processes has been tackled during 4 or 5 decades for now  by time-scale/time frequency analysis \cite{Flandrin1999, Auger2013}, by Fourier-like representation when the processes belong to the periodically-correlated subclass \cite{Napolitano2016,hurd2007} or by partial correlation coefficients (pacors) series \cite{lambert_lacroix_extension_2005,degerine_characterization_2003} to cite a few. One of the advantages of dealing with parcors resides in their strong  relation to the measure of the process by the one-to-one relation with correlation coefficients \cite{Desbouvries2003,yang_riemannian_2010}. They consequently appear explicitly in the Orthogonal Polynomial on the Real Line (OPRL)/Unit Circle (OPUC) decomposition of the measure \cite{Bingham2011,Simonbooks} and are the elements for the construction of dilation matrices that appear in the CMV/GGT \cite{simon_cmv_2007}, for the Schur flows problem with upper Hessenberg matrices \cite{ammar_constructing_1991} which are also seen in the literature as evolution operators \cite{Simonbooks} or shift operator \cite{masani_dilations_1978}, and appear finally in the state-space representation \cite{constantinescu_schur_1995,Desbouvries1996}. The dilation theory takes its roots in the Operators theory \cite{sz.-nagy_harmonic_2010} which bridges the process's measure and unitary operators. In its simplest version, the dilation theory corresponds to the Naimark dilation \cite{sz.-nagy_harmonic_2010,arsene_structure_1985}, and state that given a sequence of correlation coefficients, there exists a unitary matrix $W$ such that $ R_{n} \triangleq (1\ 0 \ 0  \cdots ) W^{n} (1\ 0 \ 0  \cdots )^{T} $ where $ \cdot^{T} $ denotes the transposition. When the process is not stationary, its associated correlation matrix is no more Toeplitz structured, a set of matrices is required \cite{constantinescu_schur_1995} and the previous expression becomes $ R_{i,j} \triangleq (1\ 0 \ 0  \cdots ) W_{i+1}W_{i+2} \cdots W_{j} (1\ 0 \ 0  \cdots )^{T} $. The matrices $W_i$ are theoretically understood as infinite rotation matrices, which become finite when the correlation coefficients sequence is itself finite. In that particular case, the matrices $W_i$ belong to $SO(n)$ or $SU(n)$, the special orthogonal or unitary group respectively and the process's measure is totally described by the set of $W_i$. As a consequence, the measure of the process is beautifully characterized for the non-stationary case, by a sampled trajectory induced by the dilation matrices on the appropriate Lie Group. When the process is periodically correlated, the sequence of parcors inherits of the periodicity and the sequence of dilation matrices becomes periodic as well, we obtain consequently a closed path as illustrated in Fig.\ref{Fig1}. Characterizing the time varying measure of the process is now tackled by studying curves (or sampled curves) on special groups.\\
Information geometry is now a fundamental approach for describing stochastic processes \cite{Manton2013}. The second order statistical properties/moments may be analyzed, characterized and compared \cite{Barbaresco2013,Balaji2014} to improve estimation \cite{Sun2014,LipengNing2012} or classification of different processes \cite{Kalunga2015}. When dealing with density estimation \cite{Jiang2012}, the space of $n \times n$ symmetric matrices $\mathcal{S}ym(n)$ is generally preferred and many developments  have been proposed under the Semi Positive Definite (SPD) assumption \cite{pennec_intrinsic_2006,pennec_riemannian_2006,Smith2005,Chevallier2014} for which the set of SPD matrices constitute a convex half-cone in the vector space of matrices. This leads to give more insights on the Fisher Information metric \cite{Chevallier2014,Jiang2012} or Wasserstein metric \cite{Kolouri2017} and also cope with optimal mass transportation problems \cite{Barbaresco2011}. Many efforts have also been made the last decade to exploit the hyperbolic geometry structure not of the correlation matrices directly but of the related parcors when obtained in stationary conditions \cite{desbouvries_non_euclidean_2003,desbouvries_geometrical_nodate,yang_riemannian_2010,barbaresco_interactions_2009,arnaudon_riemannian_2013}. As the Kullback–Leibler divergence let do, the comparison of stationary processes is then made by comparing curves, whose sampled points are parcors sequences, defined on several copies of the Poincaré disk through geodesics deformation. Treating the non-stationary case has not been tackled to our knowledge with the previous mentioned approaches. In this paper, we hope to initiate interest in filling this gap by extending the representation and the characterization of processes' measure in non-stationary context. First, using the dilation theory approach to give sampled points and next giving the prescribed geodesics equations used for curves or paths comparisons in Lie group.\\ 
To support the reader, some insights on dilation theory are given in section~\ref{section:1}. Practical implementations of dilation matrices according to the operator theory approach \cite{arsene_structure_1985, constantinescu_schur_1995} or  the lattice filter structure approach \cite{kailath_naimark_1986,sayed_recursive_2002} are also discussed and the strong connection between parcors and the dilation matrices are emphasized.
The section~\ref{section:2} focuses on the geometry of the curves induced by the dilation on particular manifolds. The general framework is first introduced by recalling concepts of distances and shape of curves when the ambiant space is not flat. Next, the Square Root Velocity (SRV) functions are developed and adapted to Lie Group and a procedure to compare non-stationary processes through their time evolution trajectory is presented. Finally, a conclusion is drawn in section~\ref{section:conclu} and the reader will find some technical tools in the Appendix section. 

\begin{figure}
\begin{center}
\includegraphics[scale=0.5]{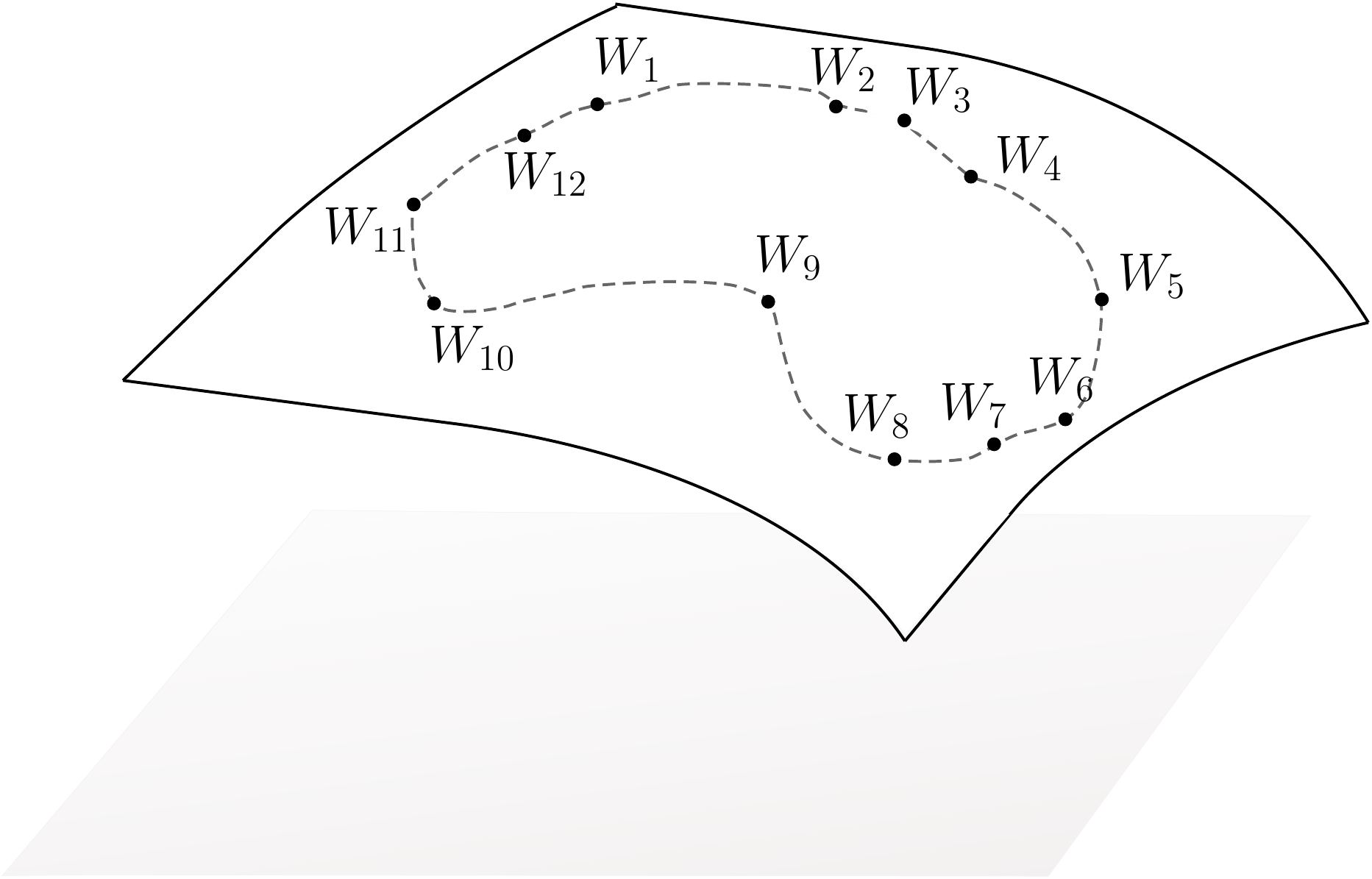}
\end{center}
\caption{Illustration of a sampled closed trajectory drawn in $SO(n)$ or $SU(n)$ which materializes the time varying  of the periodically-correlated measure for a stochastic process. Each $W_i$ are dilation matrix built through the parcors.}\label{Fig1}
\end{figure}

\section{The structure of semi-positive definite matrices and the dilation theory}\label{section:1}

\subsection{ The theory of dilation and the interaction with }

Let us give some insights on the dilation theory. In its fundamental definition, the dilation theory consists for an Hilbert space $\mathcal{H}$ and an operator-valued function $f$, \emph{i.e.} a $\mathcal{L}(\mathcal{H})$-valued function, to find a larger Hilbert space $H$ and an other application $\mathcal{F}$ such that $f$ is the orthogonal projection of $\mathcal{F}$ :
\begin{equation}
f(t) = P_{\mathcal{H}}\mathcal{F}(t), \qquad t \in \mathbb{Z}
\end{equation}

where $P_{\mathcal{H}}$ denotes the orthogonal projection onto the Hilbert space $\mathcal{H}$. The ideas of the dilation theory are : 

\begin{itemize}
\item
there exists a larger space from which the original function (or matrix) is deduced
\item
we can choose the "dilated" function to be simpler. For instance, when dealing with matrices, each of its coefficients can be expressed as the projection of a larger unitary matrix. In this case, we obtain a unitary dilation. This approach has been for example developed in \cite{Makagon1989,Miamee1989,miamee90} for the stationary dilation of periodically-correlated processes.
\end{itemize}

\subsubsection{Dilation and rotation of contractions}

For an operator $T$ on a Hilbert space $ \mathcal{H} $, we denote by $ T^{\ast} $ the adjoint operator, \emph{i.e.} the operator on $ \mathcal{H} $ such that $ \inner{ Tx }{y} = \inner{x}{T^{\ast}y} $ for all $ x,y \in \mathcal{H} $. An operator $T \in \mathcal{L}(\mathcal{H})$ is said to be a contraction if $\norm{T} \leq 1$ where $\norm{\cdot}$ is the operator norm. We deduce the expression for the defect operator $D_{T} = \left(  I - T^{\ast}T \right)^{1/2}$ and its adjoint $D_{T^{\ast}} = \left(  I - T T^{\ast} \right)^{1/2}$. \\
One of the easiest result is that, given a contraction $\Gamma$, the aforementioned unitary Julia operator 
\begin{equation}
J(\Gamma) = 
\begin{pmatrix}
\Gamma & D_{\Gamma^{\ast}} \\
D_{\Gamma} & - \Gamma^{\ast}
\end{pmatrix}
\end{equation}

satisfies, for all $n\in \mathbb{N}$ 
\begin{equation}
\Gamma^{n} = 
\begin{pmatrix}
1 & 0
\end{pmatrix}
J(\Gamma)^{n}
\begin{pmatrix}
1\\
0
\end{pmatrix}.
\end{equation}

In other words, the elementary rotation of a contraction, called consequently the Julia operator, also corresponds to the unitary dilation operator of the contraction. Note that the Julia operator is sometimes called the Halmos extension \cite{masani_dilations_1978} of a contraction. \\
\subsubsection{Dilation and isometries}

Following the idea and the formulation of Naimark, the dilation theory can be restated in terms of dilation of a sequence of operators, or sequence of numbers when the dimension of the underlying Hilbert space is 1. Recall that a sequence of operators $\left\{  R_{n} \right\}_{n=1}^{\infty}$ acting on $\mathcal{H}$ is said to be positive if 
\begin{equation}
\sum_{i,j=0}^{+ \infty}{  \inner{ R_{i-j}h_{i}}{ h_{j} }} \geq 0 \quad for\ all \ h_{i} \in \mathcal{H}_{i}.
\end{equation}
Assuming now that $R_{n}^{\ast} = R_{-n}$ and $R_{0} = I$, leads to the following Toeplitz matrix:
\begin{equation}\label{Positive-Definite matrix}
R^{(m)} =
\begin{pmatrix}
I & R_{1} & \cdots & R_{m-1} \\
R_{1}^{\ast} & I & \cdots & R_{m-2} \\
\cdot & \cdot & \cdots & \cdot \\
\cdot & \cdot & \cdots & \cdot \\
R_{m-1}^{\ast} & R_{m-2}^{\ast} & \cdots & I
\end{pmatrix}
\end{equation}
which is positive-definite. Remark that this matrix can be seen as the correlation matrix of a stationary process, as it is positive and Toeplitz \cite{Bochner1932,arsene_structure_1985,tseng_dilation_2006}. Owing this property, we obtain the following relation 
\begin{equation}\label{Minimal Unitary Dilation}
R_{n} = P_{\mathcal{H}} U^{n}\mid_{\mathcal{H}} , \quad for \ all\ n \geq 0 \; \text{ and } \; U \text{ an isometry on }\mathcal{K}
\end{equation}
due to Naimark dilation Theorem. Furthermore, if $\mathcal{K} = \underset{n \geq 0} {\bigvee} U^{n}\mathcal{H}$ then $U$ is unique up to an isomorphism.\\

\subsubsection{Dilation and measure}

From Bochner's theorem we known that a matrix of type (\ref{Positive-Definite matrix}) can be seen as the Fourier coefficient of a given positive Borelian measure. This is also known as the moment or trigonometric problem \cite{constantinescu_schur_1995}. Therefore, we can restate the dilation problem in term of measure.  If we denote by $ E_{\lambda} $ an operator-valued distribution function on $ [0, 2 \pi[ $ then the function

\begin{equation}
R_{n} = \int_{0}^{2\pi}{ \mathrm{e}^{i n \lambda} dE_{\lambda} }.
\end{equation}

This function is positive-definite and shows the strong correspondance between the spectral measure and the dilation theory. There exists so a unitary operator on a Hilbert space $ \mathcal{K} $ such that $ R_{n} = P_{\mathcal{H}} U(n)$ where $ P_{\mathcal{H}}$ stands for the orthogonal projection. With the spectral representation of unitary operators, $ U = \int_{0}^{2 \pi}{ \mathrm{e}^{i \lambda}d E_{\lambda} } $ and we have
 \begin{equation}
\int_{0}^{2 \pi}{ \mathrm{e}^{i n \lambda} d\inner{E_{\lambda} u}{v} } = \int_{0}^{2 \pi}{ \mathrm{e}^{i n \lambda} d\inner{F_{\lambda} u}{v} }
\end{equation}
or, in an equivalent form :
\begin{equation}
E_{\lambda} = P_{\mathcal{H}} F_{\lambda}.
\end{equation}
Note that the operator-valued measure $ F_{\lambda} $ is in fact an orthogonal projection-valued measure since all its increments are orthogonal.\\ 
Dilations matrices having been introduced, we give now in the next section  a methodology to understand how they are obtained.

\subsection{Construction of Dilation Matrices}

As mentioned previously, given a SPD matrix $ R = \left( R_{i,j} \right)_{i,j \in \mathbb{N}} $, it is possible to build a sequence of matrices $ \left\{ W_{i} \right\}_{i \in \mathbb{N}} $ such that 
$ R_{i,j} = \begin{pmatrix}
1 & 0 & 0 & \cdots & 0
\end{pmatrix}W_{i}W_{i+1} \cdots W_{j-1}
\begin{pmatrix}
1 & 0 & 0 & \cdots & 0
\end{pmatrix}^{T}$
by a two-step procedure.  For the first step, the following theorem is needed \cite{constantinescu_schur_1995} \ :

\begin{thm}[Structure of a positive definite block matrix]\label{theo:1}
Let $X$ and $Z$ be positive operators in $\mathcal{L}(\mathcal{H}_{X} )$ and $\mathcal{L}(\mathcal{H}_{Z} )$ respectively. Then the following are equivalent :
\begin{itemize}
\item The operator $A = 
\begin{pmatrix}
X & Y \\
Y^{\ast} & Z
\end{pmatrix}$ is positive
\item
There exists a unique contraction $\Gamma$ in $\mathcal{L}(  \mathcal{R}(Z) , \mathcal{R}(X) )$ such that 
\begin{equation}
Y = X^{1/2}\Gamma Z^{1/2}
\end{equation}
\end{itemize}
\end{thm}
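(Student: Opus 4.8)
\textbf{Sufficiency ($Y = X^{1/2}\Gamma Z^{1/2}\ \Rightarrow\ A\ge 0$).} I would prove the two implications separately; for this one the idea is to realise $A$ as a congruence of a block operator that is visibly positive. On $\overline{\mathcal{R}(X)}\oplus\overline{\mathcal{R}(Z)}$ (extended by the identity on the orthogonal complements) one has, thanks to $D_{\Gamma}^{2} = I - \Gamma^{\ast}\Gamma$, the Cholesky‑type factorisation
\begin{equation*}
\begin{pmatrix} I & \Gamma \\ \Gamma^{\ast} & I \end{pmatrix}
= \begin{pmatrix} I & 0 \\ \Gamma^{\ast} & D_{\Gamma} \end{pmatrix}
\begin{pmatrix} I & \Gamma \\ 0 & D_{\Gamma} \end{pmatrix},
\end{equation*}
so this block operator is positive. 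Conjugating it by the block‑diagonal operator $S$ with entries $X^{1/2}$ and $Z^{1/2}$,
\begin{equation*}
A \;=\; \begin{pmatrix} X^{1/2} & 0 \\ 0 & Z^{1/2} \end{pmatrix}
\begin{pmatrix} I & \Gamma \\ \Gamma^{\ast} & I \end{pmatrix}
\begin{pmatrix} X^{1/2} & 0 \\ 0 & Z^{1/2} \end{pmatrix},
\end{equation*}
one recovers the diagonal blocks $X$ and $Z$ and the off‑diagonal block $X^{1/2}\Gamma Z^{1/2} = Y$; being of the form $S^{\ast}(\cdot)S$ with a positive middle factor, $A$ is positive.

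\textbf{Necessity ($A\ge 0\ \Rightarrow$ existence of $\Gamma$).} I would factor $A = B^{\ast}B$ (for instance $B = A^{1/2}$) and split $B = \begin{pmatrix} B_{1} & B_{2} \end{pmatrix}$ along the decomposition $\mathcal{H}_{X}\oplus\mathcal{H}_{Z}$ of its domain, so that $X = B_{1}^{\ast}B_{1}$, $Z = B_{2}^{\ast}B_{2}$ and $Y = B_{1}^{\ast}B_{2}$. Since $\|B_{1}h\|^{2} = \inner{Xh}{h} = \|X^{1/2}h\|^{2}$, the assignment $X^{1/2}h\mapsto B_{1}h$ extends to a partial isometry $V_{1}$ with $B_{1} = V_{1}X^{1/2}$, isometric on $\overline{\mathcal{R}(X^{1/2})} = \overline{\mathcal{R}(X)}$; likewise $B_{2} = V_{2}Z^{1/2}$. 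Then $Y = B_{1}^{\ast}B_{2} = X^{1/2}(V_{1}^{\ast}V_{2})Z^{1/2}$, and $\Gamma := V_{1}^{\ast}V_{2}\big|_{\overline{\mathcal{R}(Z)}}$ is a contraction (a product of partial isometries) with values in $\overline{\mathcal{R}(X)}$; inserting it changes nothing in the product because $\mathcal{R}(Z^{1/2})\subseteq\overline{\mathcal{R}(Z)}$ and $X^{1/2}$ annihilates $\overline{\mathcal{R}(X)}^{\perp}=\ker X^{1/2}$, so $Y = X^{1/2}\Gamma Z^{1/2}$. (Alternatively one can bypass $B$ and argue via range inclusions: $A\ge 0$ forces $\mathcal{R}(Y)\subseteq\mathcal{R}(X^{1/2})$, whence $Y = X^{1/2}C$ by Douglas's factorisation lemma, and a second application of the lemma pushes $Z^{1/2}$ to the right with the norm controlled by $1$.)

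\textbf{Uniqueness.} If $X^{1/2}\Gamma_{1}Z^{1/2} = X^{1/2}\Gamma_{2}Z^{1/2}$ with $\Gamma_{1},\Gamma_{2}$ acting from $\overline{\mathcal{R}(Z)}$ into $\overline{\mathcal{R}(X)}$, then injectivity of $X^{1/2}$ on $\overline{\mathcal{R}(X)}$ gives $(\Gamma_{1}-\Gamma_{2})Z^{1/2} = 0$, and density of $\mathcal{R}(Z^{1/2})$ in $\overline{\mathcal{R}(Z)}$ then forces $\Gamma_{1} = \Gamma_{2}$.

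\textbf{Where the difficulty lies.} The substantive part is the necessity direction. Because $X$ and $Z$ are only positive and need not be invertible, there is no literal Schur complement to quote; one must pass through the $B^{\ast}B$‑factorisation together with a polar‑type decomposition (or invoke Douglas's lemma) and, crucially, keep careful track of the closures of the ranges so that the contraction $\Gamma$ genuinely lives in $\mathcal{L}\big(\mathcal{R}(Z),\mathcal{R}(X)\big)$ and not merely between the ambient spaces $\mathcal{H}_{Z}$ and $\mathcal{H}_{X}$. The sufficiency and the uniqueness are routine once the factorisations above are written down.
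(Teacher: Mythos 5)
Your proof is correct and takes essentially the same route as the paper's (which is relegated to Annexe~A): your sufficiency argument is precisely the Cholesky-type factorization displayed there, obtained by conjugating the positive operator with blocks $I,\Gamma,\Gamma^{\ast},I$ by $\mathrm{diag}(X^{1/2},Z^{1/2})$, and your necessity argument via $A=B^{\ast}B$ together with a polar-type decomposition is the explicit form of the Douglas-type factorization lemma the appendix records. If anything your version is more complete, since the appendix only states that lemma and the factorization identity without actually carrying out the necessity and uniqueness directions, both of which you supply correctly (including the range-closure bookkeeping needed when $X$ and $Z$ are not invertible).
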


\begin{proof}
Annexe~\ref{annexe1}
\end{proof}
Let us now apply this relation repeatedly on a SPD matrix. To fix ideas, let the  $ 3 \times 3 $ (block-)matrix 
\begin{equation}R =
\begin{pmatrix}
R_{1,1} & R_{1,2} & R_{1,3} \\
R_{1,2}^{\ast} & R_{2,2} & R_{2,3} \\
R_{1,3}^{\ast} & R_{2,3}^{\ast} & R_{3,3}
\end{pmatrix}
\end{equation}
and apply theorem \ref{theo:1} to $ \begin{pmatrix} R_{1,1} & R_{1,2} \\ R^{\ast}_{1,2} & R_{2,2} \end{pmatrix} $, $ \begin{pmatrix} R_{2,2} & R_{2,3} \\ R^{\ast}_{2,3} & R_{3,3} \end{pmatrix} $ and finally to $ \begin{pmatrix} R_{1,2} & R_{1,3} \\ \end{pmatrix} $. Note that when a square root of a (block-)matrix has to be chosen, it is done according to the Schur decomposition given in Annexe~\ref{annexe1}. At each step, a contraction $ \Gamma_{i, j} $ is generated with respect to the indices of the upper and lower (block-)matrices of the main diagonal, \emph{e.g.} $ \Gamma_{1, 2} $ for the first $ \begin{pmatrix} R_{1,1} & R_{1,2} \\ R^{\ast}_{1,2} & R_{2,2} \end{pmatrix} $ (block-)matrix. We obtain thus a one-to-one correspondence between the SPD matrix $R$ and the set of contractions $\left\{ \Gamma_{i,j} \right\}_{i=1,2 \; j=3}$. Regarding the huge work of Constantinescu \cite{constantinescu_schur_1995}, we will called these contractions the Schur-Constantinescu parameters. Considering now unit variance and arbitrary size $n \times n$ for the SPD matrix, allows us to write the correspondance:
\begin{equation}
\label{correspondance correlation matrix parcor matrix}
\begin{pmatrix}
I & R_{1,2} &  &  R_{1,n} \\
R^{\ast}_{1,2} & I & \ddots&   \\ 
& \ddots & \ddots & R_{n-1,n}\\
& & & \\
R_{1,n}^{\ast} &  & R_{n-1,1}^{\ast} & I
\end{pmatrix}
\longleftrightarrow
\begin{pmatrix}
0 & \Gamma_{1,2} & \Gamma_{1,3} &  & \cdots &   \Gamma_{1,n}\\
0 & 0 & \Gamma_{2,3} & \Gamma_{2,4} & \cdots & \Gamma_{2,n}\\
\vdots & & \ddots & \ddots & \ddots & \\
 & &  & &  & \Gamma_{n-2,n}\\
 & & & &0 & \Gamma_{n-1,n}\\
0 & 0 & \cdots & &    & 0
\end{pmatrix}.
\end{equation}

Once (\ref{correspondance correlation matrix parcor matrix}) established, each dilation matrix $ W_{i} $ are build-up as a product of Givens rotations of a sequence of Schur-Constantinescu parameters in the following way 
\begin{equation}
W_{i} = G(\Gamma_{i,i+1}) G(\Gamma_{i,i+2}) \cdots G(\Gamma_{i,j})
\end{equation}
where $ G_{\Gamma_{i,i+l} }$ denote the Givens rotation of $ \Gamma_{i,i+l} $: 
\begin{equation}
G(\Gamma_{i,i+l}) = I \oplus \begin{pmatrix}
\Gamma_{i,i+l} & D_{\Gamma_{i,i+l}^{\ast}} \\
D_{\Gamma_{i,i+l}} & -\Gamma_{i,i+l}^{\ast}
\end{pmatrix} \oplus I.
\end{equation}
When the SPD matrix is Toeplitz, which correspond to a stationary underlying process, then all dilation matrices $ W_{i} $ are identical and they take the form
\begin{equation}
W_{i} = U = 
\begin{pmatrix}
\Gamma_{1} & D_{\Gamma_{1}^{\ast}} \Gamma_{2} & D_{\Gamma_{1}^{\ast}}D_{\Gamma_{2}^{\ast}} \Gamma_{3} & D_{\Gamma_{1}^{\ast}}D_{\Gamma_{2}^{\ast}} D_{\Gamma_{3}^{\ast}}  \Gamma_{4} & \cdots \\
D_{\Gamma_{1}} & - \Gamma_{1}^{\ast}\Gamma_{2}  & - \Gamma_{1}^{\ast}D_{\Gamma_{2}^{\ast}} \Gamma_{3} & - \Gamma_{1}^{\ast}D_{\Gamma_{2}^{\ast}} D_{\Gamma_{3}^{\ast}} \Gamma_{3} & \cdots \\
0 & D_{\Gamma_{2}} & - \Gamma_{2}^{\ast}\Gamma_{3}  & - \Gamma_{2}^{\ast}D_{\Gamma_{3}^{\ast}} \Gamma_{4} & \cdots \\
0 & 0 &  D_{\Gamma_{3}} & - \Gamma_{3}^{\ast}\Gamma_{4} & \cdots \\
0 & 0 & 0 & D_{\Gamma_{4}} & \cdots \\
\cdot & \cdot & \cdot & \cdot &  & \cdots \\
\cdot & \cdot & \cdot & \cdot &  & \cdots \\
\end{pmatrix}
\label{dilation matrix}
\end{equation}

which is nothing less than the Naimark dilation introduced in the first part, \emph{i.e.} $ R_{i,j} = R_{j-1} = [1\ 0\ 0\ \cdots] U^{j-i} [1\ 0\ 0\ \cdots]^{T} $.

For a sake of completeness, we give the correspondence between the coefficients of the SPD matrix (the left-hand side of (\ref{correspondance correlation matrix parcor matrix}) ) and the Schur-Constantinescu parameters:

\begin{thm} 
\label{Schur-Constantinescu parametrization}
The matrix $R^{(n)} = [R_{k,j}]_{k,j=1}^{n}$, satisfying $R_{j,k}^{\ast} = R_{k,j}$ is positive if and only if 
\begin{itemize}
\item $R_{kk} \geqslant 0 $ for all k
\item There exists a family $\lbrace \Gamma_{k,j} \mid k,j =1, \cdots n, k\leqslant j \rbrace$ of contraction such that 

\begin{equation} \label{Schur-Constantinescu formula}
R_{k,j} = B_{k,k}^{\ast} (L_{k,j-1} U_{k+1,j-1} C_{k+1,j}\ +\ D_{\Gamma_{k,k+l}^{\ast}} \cdots D_{\Gamma_{k,j-l}^{\ast}} \Gamma_{k,j} D_{\Gamma_{k+1,j}} \cdots D_{\Gamma_{j-1,j}} )B_{j,j}
\end{equation}

where $B_{k,k}$ is any square root of $R_{k,k}$
\end{itemize}
\end{thm}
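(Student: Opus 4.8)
The plan is to prove the equivalence by induction on the band width $d=j-k$, reducing the extension of a band submatrix by one diagonal at each stage to Theorem~\ref{theo:1}. Write $R_{[k,j]}:=[R_{s,t}]_{s,t=k}^{j}$ for the band submatrices. I would establish the local statement: $R_{[k,j]}$ is positive if and only if $R_{[k,j-1]}$ and $R_{[k+1,j]}$ are positive and there is a contraction $\Gamma_{k,j}$ for which $R_{k,j}$ is given by (\ref{Schur-Constantinescu formula}) --- and its right-hand side involves only parameters already produced on the two shorter bands. Iterating this on $R^{(n)}=R_{[1,n]}$ and unwinding the recursion then yields the theorem; there is no consistency issue, since the contraction $\Gamma_{k,j}$ extracted at each stage depends only on the entries of $R_{[k,j]}$.

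The base cases are immediate: for $d=0$ the statement is just $R_{kk}\geq 0$ with an empty family of contractions, and for $d=1$ it is Theorem~\ref{theo:1} applied to $\begin{pmatrix}R_{kk}&R_{k,k+1}\\ R_{k,k+1}^{\ast}&R_{k+1,k+1}\end{pmatrix}$, which yields the unique contraction $\Gamma_{k,k+1}$ with $R_{k,k+1}=B_{kk}^{\ast}\Gamma_{k,k+1}B_{k+1,k+1}$, i.e.\ (\ref{Schur-Constantinescu formula}) with $L$, $U$, $C$ and the defect products all empty. For the inductive step I would fix $k<j$ with $d\geq 2$; positivity of $R_{[k,j]}$ forces positivity of its leading and trailing principal submatrices $R_{[k,j-1]}$ and $R_{[k+1,j]}$, so by the inductive hypothesis all parameters other than $\Gamma_{k,j}$ already exist. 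Writing
\begin{equation}
R_{[k,j]}=\begin{pmatrix}R_{kk}&\rho&R_{k,j}\\ \rho^{\ast}&R_{[k+1,j-1]}&\sigma\\ R_{k,j}^{\ast}&\sigma^{\ast}&R_{jj}\end{pmatrix},\qquad \rho=[\,R_{k,k+1}\ \cdots\ R_{k,j-1}\,],\quad \sigma=[\,R_{k+1,j}\ \cdots\ R_{j-1,j}\,]^{\ast},
\end{equation}
I would take the generalized Schur complement with respect to the middle block $R_{[k+1,j-1]}$ --- legitimate because $R_{[k+1,j-1]}\geq 0$ and the range inclusions required for $\rho R_{[k+1,j-1]}^{\dagger}$ and $R_{[k+1,j-1]}^{\dagger}\sigma$ follow, through Theorem~\ref{theo:1}, from positivity of $R_{[k,j-1]}$ and $R_{[k+1,j]}$. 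This reduces positivity of $R_{[k,j]}$ to positivity of a $2\times 2$ block matrix, to which Theorem~\ref{theo:1} applies: it produces a unique contraction $\Gamma_{k,j}$ with
\begin{equation}
R_{k,j}=\rho R_{[k+1,j-1]}^{\dagger}\sigma\ +\ \bigl(R_{kk}-\rho R_{[k+1,j-1]}^{\dagger}\rho^{\ast}\bigr)^{1/2}\,\Gamma_{k,j}\,\bigl(R_{jj}-\sigma^{\ast}R_{[k+1,j-1]}^{\dagger}\sigma\bigr)^{1/2}.
\end{equation}

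The remaining task --- and the one I expect to be the real obstacle --- is to identify this expression with (\ref{Schur-Constantinescu formula}). Substituting the inductive factorizations of $R_{[k,j-1]}$, $R_{[k+1,j-1]}$ and $R_{[k+1,j]}$ and using the commutation identities $D_{\Gamma}\Gamma^{\ast}=\Gamma^{\ast}D_{\Gamma^{\ast}}$ and $\Gamma D_{\Gamma}^{2}=D_{\Gamma^{\ast}}^{2}\Gamma$ valid for any contraction, one checks that $\rho R_{[k+1,j-1]}^{\dagger}\sigma=B_{kk}^{\ast}L_{k,j-1}U_{k+1,j-1}C_{k+1,j}B_{jj}$, and that the two Schur-complement blocks factor so that admissible square roots of them are $D_{\Gamma_{k,j-1}^{\ast}}\cdots D_{\Gamma_{k,k+1}^{\ast}}B_{kk}$ and $D_{\Gamma_{k+1,j}}\cdots D_{\Gamma_{j-1,j}}B_{jj}$ respectively; feeding these into the previous display and relabelling $\Gamma_{k,j}$ reproduces (\ref{Schur-Constantinescu formula}) exactly. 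This ``telescoping'' of the Cholesky-type factors of a banded positive matrix into defect-operator products is where the bulk of the multi-index computation lies (it is carried out in detail in \cite{constantinescu_schur_1995}); one must also keep every step in the range / shorted-operator sense so that invertibility of $R_{[k+1,j-1]}$ is never assumed, and invoke the uniqueness clause of Theorem~\ref{theo:1} to see that the parameters do not depend on the square-root choices fixed by the Schur decomposition of Annexe~\ref{annexe1} (changing that choice only conjugates $\Gamma_{k,j}$ by unitaries and preserves its being a contraction). Once the identification is in hand, the converse follows by running the recursion backwards: given the diagonal square roots and arbitrary contractions $\Gamma_{k,j}$, define the $R_{k,j}$ by (\ref{Schur-Constantinescu formula}); each one-step Schur-complemented $2\times 2$ block is then positive by Theorem~\ref{theo:1}, and unwinding the $3\times 3$ reductions gives $R^{(n)}\geq 0$.
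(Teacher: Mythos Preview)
The paper does not actually prove this statement: its entire proof is the one-line citation ``This theorem is proved in \cite{constantinescu_schur_1995}.'' Your proposal is precisely the standard argument carried out in that reference --- induction on the band width $d=j-k$, a generalized Schur-complement reduction of the $3\times 3$ block decomposition of $R_{[k,j]}$ with respect to its middle block $R_{[k+1,j-1]}$, an application of Theorem~\ref{theo:1} to the resulting $2\times 2$ Schur complement to extract $\Gamma_{k,j}$, and then the defect-operator bookkeeping that identifies the two summands with $L_{k,j-1}U_{k+1,j-1}C_{k+1,j}$ and the telescoped defect product. So there is nothing to compare: you have supplied an outline of the proof the paper chose to omit, and the outline is correct in substance (including your care with range conditions and the Moore--Penrose inverse so that invertibility of the middle block is never assumed).
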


Where we have :

\[
L_{k,j} = [ \Gamma_{k,k+1} \quad D_{\Gamma_{k,k+l}^{\ast}}\Gamma_{k,k+2} \quad \cdots \quad D_{\Gamma_{k,k+l}^{\ast}} \cdots D_{\Gamma_{k,j-1}^{\ast}}\Gamma_{k,j} ]
\] 

a row contraction associated o the set of parameters $\lbrace \Gamma_{k,m}  \mid k < m \leq j \rbrace$,

\[
C_{k,j} = [ \Gamma_{j-1,j} \quad \Gamma_{j-2,j}D_{\Gamma_{j-1,j}} \quad \cdots \quad \Gamma_{k,j} D_{\Gamma_{k+1,j}} \cdots D_{\Gamma_{j-1,j}} ]^{T}
\]

a column contraction associated to the set of parameters $\lbrace \Gamma_{m,j}  \mid m = j-1, \cdots k \rbrace$, and finally 
\[ 
U_{k,j} = G(\Gamma_{k,k+1})G(\Gamma_{k,k+2}) \cdots G(\Gamma_{k,k+j}) \left( U_{k+1,j} \oplus I \right)
 \]
 
\begin{proof}
This theorem is proved in \cite{constantinescu_schur_1995}.
\end{proof} 
A different approach leading to the same results can be found in \cite{timotin_prediction_1986}, using directly the Kolmogorov decomposition. In \cite{kailath_naimark_1986} the Naimark dilation is constructed using lattice filter and finally applications of this decomposition in quantum mechanics are to be found in \cite{tseng_contractions_2006,tseng_dilation_2006} for example. 


\subsection{Link between the Levinson algorithm and Constantinescu's formulae.}

The theory explained so far provides infinite dimensional operators. Of course, it is not possible to treat with infinite dimensional object in practice, particularly with numerical computations. To strengthen our development, it is then necessary to adapt the theory to the finite dimensional case. This is tackled for example in \cite{foias_geometric_1990} where a Levinson algorithm is performed on block matrices.

\subsubsection{The principles of Levinson Algorithm}
In classical stationary signal processing, the Levinson algorithm is a procedure to estimate linear regression coefficients, say $ \left\{ \alpha_{k} \right\} $ as the projection of the signal at time $t$ onto the space spanned by its $n$ past values:  $ X_{t} = -\sum_{i = 1}^{ n }{ \alpha_{i} X_{t-i} } + \epsilon_{t}^{n}  $, where $ \epsilon $ is a white gaussian noise by assumption. The last coefficient of the model is the $n$-th reflexion (parcor) coefficient. If the correlation matrix of the stationary process is noted $R = \left\{ R_{ij}\right\}_{i,j=1,\ldots,n}$ we have the so called Yule-Waker equation: 
\begin{equation}
\begin{pmatrix}
R_{0} & R_{1} &  \cdots & R_{n}\\
 R_{1}^* & \ddots & \ddots \\
\vdots & \ddots & \ddots & \\
R_{n}^* & \cdots &  & R_{0}
\end{pmatrix}
\begin{pmatrix}
1\\
\alpha_{1}^{n}\\
\vdots\\
\alpha_{n}^{n}
\end{pmatrix} = 
\begin{pmatrix}
\epsilon_{n}^{2}\\
0\\
\vdots\\
0
\end{pmatrix}.
\end{equation}
The Levinson algorithm performs a recursion on the order $n$ of the model which appears to be exactly what is done for the dilation matrices. 
\subsubsection{Analogy between Levinson and Dilation}
Let us recall that the form of the infinite dimensional dilation matrix is given by (\ref{dilation matrix}). This matrix acts on the space $ \mathcal{K} = \mathcal{H} \oplus \mathcal{D}_{1} \oplus \mathcal{D}_{2} \cdots  $ where the set of $ \mathcal{D}_{i} $ stands for the defect spaces. The forward prediction is defined by
\begin{equation}
U^{n} - P_{\mathcal{H} \oplus \mathcal{D}_{1} \oplus \mathcal{D}_{2} \cdots \oplus \mathcal{D}_{n}}U^{n}h = \begin{pmatrix}
0,0,\cdots, 0,\mathcal{D}_{n}\mathcal{D}_{n-1}\cdots \mathcal{D}_{1}h, 0, \cdots
\end{pmatrix}
\end{equation}
for $ h \in \mathcal{H} $ and the non zero elements are in the $ n+1 $ position. In \cite{foias_geometric_1990}, the authors denotes the right-hand part by $ \phi_{n}\epsilon_{n}h $ where $ \epsilon_{n} = \mathcal{D}_{n}\mathcal{D}_{n-1}\cdots \mathcal{D}_{1}  $ and $ \phi_{n} $ embeds onto the $ n+1 $ position. They prove so the exact accordance with the classical Levinson procedure. In that case, $ \epsilon_{n} = \mathcal{D}_{n}\mathcal{D}_{n-1}\cdots \mathcal{D}_{1}  $  is therefore nothing else than the prediction error when the Levinson procedure is applied on the positive-definite kernel representing the correlation matrix. \\
Similarly, for the backward prediction error we have
\begin{equation}
h - P_{\cup{C_{n}}}h = \phi_{n^{\ast}}\epsilon_{n^{\ast}}h
\end{equation}
where $ \epsilon_{n^{\ast}} = \mathcal{D}_{n^{\ast}}\mathcal{D}_{n-1^{\ast}}\cdots \mathcal{D}_{1^{\ast}} $ and $ C_{n} $ stands for the $n$ first columns of $U$.
In terms of dilation, the Levinson procedure at step $n$ can be written as
\begin{equation}
\label{Levinson and dilation}
\epsilon^{\ast}_{n^{\ast}}\Gamma_{n+1}\epsilon_{n}h = R_{n+1} + \sum_{i=1}^{n}{R_{i}A_{n,i-1}h}
\end{equation}
where the $ A_{n,i} $ are the Levinson regression coefficients. Rewrite the equation such that
\begin{equation}
\label{Levinson and dilation}
R_{n+1} = \epsilon^{\ast}_{n^{\ast}}\Gamma_{n+1}\epsilon_{n}h - \sum_{i=1}^{n}{R_{i}A_{n,i-1}h}
\end{equation}

and compared it with theorem \ref{Schur-Constantinescu parametrization}, allows to see that the Levinson recursion is equivalent to the parametrization of positive-definite kernel by means of Schur-Constantinescu parameters.
We remark in addition that the necessary truncation error made when dealing with dilation matrices is now quantified.\\
Dilation matrices being now fully introduced, we focus the attention of the reader on the hidden information contained in their timely geometrical dissemination.

\section{Analysis of curves on a manifold induced by the dilation}\label{section:2}



parcors, composing dilation matrices, have already been given a geometrical point of view, as for exemple in \cite{yang_riemannian_2010} where the sequence of parcors asociated with a stationary process is seen as a point onto the Poincaré polydisk $ \mathcal{P}^{n} $, \emph{i.e.} the product of Poincaré disk. To give geometrical settings, a distance to characterize individual parcors is then proposed and discussed. In \cite{brigant_computing_2016}, a stochastic process is studied under the local stationarity assumption. To each stationary slice of the process corresponds a sequence of parcors, represented as a point in the Poincaré polydisk $ \mathcal{P}^{n} $ as well. A trajectory is then generated on that space which materializes a curve on the manifold $ \mathcal{P}^{n} $. The underlying computations are quite intricate due to the product manifolds  and the question of nonstationarity arises. Based on the work of \cite{brigant_computing_2016,brigant_discrete_2017,celledoni_shape_2015} and \cite{zhang_video_based_2015}, we propose then to give a particular attention to this question. We first make use of Dilation theory introduced in section \ref{section:1}. When the process under study is non-stationary, a set of matrices $W_i$ is obtained. The basic idea for having geometric information on the non-stationary process is therefore to characterize the trajectory formed by the set of dilation matrices. These matrices are theoretically operator of infinite dimension but as we dispose of only a finite set of parcors, the theoretical matrices of (\ref{dilation matrix}) are truncated. Matrices respecting (\ref{dilation matrix}) are general rotation matrices which becomes perfect rotation operator belonging then to $SO(n)$ for real processes and $SU(n)$ when dealing with complex processes, when their dimension are reduced to $n \times n$. Our aim is finally to analyse those curves living on Lie Group of rotation matrices and emphasize the geometry or more precisely the intrinsic geometry formulation of these objects. For exemple, we aim at comparing different curves coming from different processes or at resuming many realizations of a stochastic process (multiple measurements) through the computation of the mean of the associated several curves. The question of the computation complexity still rises but many results have been proposed recently to overcome this difficulty and to propose closed form formulations. In particular, it is predicated to extract the shape of the trajectory for it contains the essentials, in topologic sense, information.\\ 
To allow the curves comparison, we have based our development on the work of \cite{brigant_computing_2016} and \cite{celledoni_shape_2015}. First we define the manifold $ \mathcal{M} $ given by the set of all curves in the based manifold. That leads to an other space, the shape space, for which the manifold $ \mathcal{M} $ will be a fiber bundle. We dispose then of a metric in $ \mathcal{M} $ from which a metric on the shape space is deduced. These steps are now explained in the followings.

\subsection{Basic Outline of Geometry}

Curves of interest are those ones living in the Lie group of real rotation matrices, this yields to  $ c : [0,1] \rightarrow SO(n)$. For sake  of clarity, we suppose that $c$ is continuous, we will come back to the case of discrete curves later. To study geometrical features of such curves, we interest ourself in the set of all curves lying in $ SO(n) $ ( where $ SO(n) $ is seen as a manifold) with non-vanishing velocity, \emph{i.e.} $ \mathcal{M} = \left\{ c \in \mathcal{C}^{\infty}([0,1],SO(n)) : c'(t) \neq 0 \; \forall t \right\} $, this is in fact a sub-manifold of $ \mathcal{C}^{\infty}([0,1],SO(n)) $. A curve $c$ is so a particular point in $ \mathcal{M} $. The tangent space at a curve $c$ is given by
\begin{equation}
 T_{c}\mathcal{M} = \left\{ v \in \mathcal{C}^{\infty}([0,1], TSO(n) ) : v(t) \in T_{c(t)}SO(n)  \right\}
 \end{equation}

where $ TSO(n) $ denote the tangent bundle of the based manifold $ SO(n) $. Note that a tangent vector is a curve in the tangent space of $ SO(n) $. In this manifold, the expression of distances and thus geodesics depends on the chosen metric. When comparing two curves, it is natural that the distance between this two curves should remain the same if the curve are only reparametrized, \emph{i.e.} if we define other curves that passes through the same points than the original curves but at different speeds. When the curve is discretized as we will see in the sequel, doing a reparametrization is equivalent to change the chosen points (see Figure \ref{fig:reparam}). A reparametrization is represented by an increasing diffeomorphism $ \phi \in \mathcal{D} : [0,1] \rightarrow [0,1] $ acting on the right of the curve by composition. In other words, we required that the Riemannian metric $g$ on $ \mathcal{M} $ satisfies the following property

\begin{equation}
g_{c\circ \phi}(u\circ \phi, v \circ \phi) = g_{c}(u,v)
\end{equation}
for all $ c \in \mathcal{M} $, $ u,v \in T_{c}\mathcal{M} $ and $ \phi \in \mathcal{D} $.

\begin{figure}[!h]
\center
\includegraphics[width = 10cm]{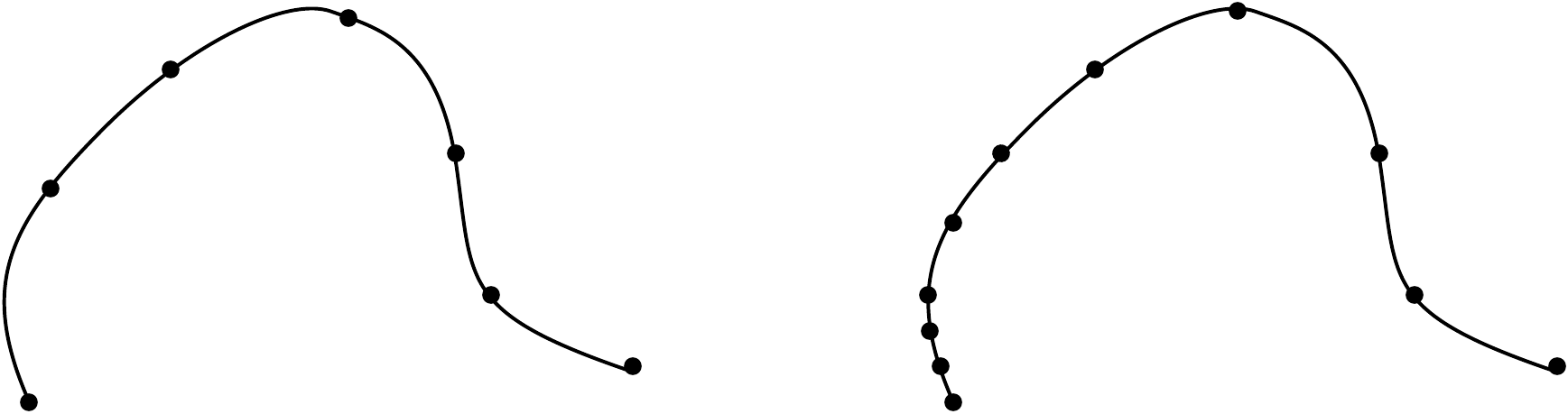}
\caption{example of a reparametrization of a curve. Here, it consists in changing the discretization. }\label{fig:reparam}
\end{figure}

This property is called reparametrization invariance. We insist on the fact that $g$ is the metric on $ \mathcal{M} $, the space of all curves on $ SO(n) $ and not on $SO(n)$ itself. In terms of distances, this gives

\begin{equation}
d_{\mathcal{M}}(c_{0} \circ \phi, c_{1} \circ \phi) = d_{\mathcal{M}}(c_{0}, c_{1})
\end{equation}
where $ d_{\mathcal{M}} $ denote the distance on $ \mathcal{M} $ corresponding to the metric $g$.
The reparametrization introduced above induces an equivalence relation between points in $ \mathcal{M} $ in such a way we have
\begin{equation}
c_{0} \sim c_{1}  \iff \exists \phi \in \mathcal{D} : c_{0} = c_{1}\circ \phi.
\end{equation}

With this equivalence relation, a quotient space can be constructed as the collection of equivalence classes, it is named the shape space and has the following writting

\begin{equation}
\mathcal{S} = \mathcal{M} / \sim,\ or\ \mathcal{S} = \mathcal{M} / \mathcal{D}.
\end{equation}

A distance function on the shape space is obtained from the distance on $ \mathcal{M} $ as follows

\begin{equation}
d_{\mathcal{S}}([c_{0}],[c_{1}]) = \underset{\phi \in \mathcal{D}}{inf} d_{\mathcal{M}}(c_{0},c_{1} \circ \phi)
\end{equation}

where $ [c_{0}], [c_{1}] $ are representatives of the equivalence classes of $ c_{0} $ and $ c_{1} $ respectively. It can be shown that this distance is independent of the choice of the representatives. It is in fact inherited from the fiber bundle structure $ \pi = \mathcal{M} \rightarrow \mathcal{S} $. As closed curves are of main interest in this work, we can also define the set
\begin{equation}
\mathcal{M}^{c} = \left\{ c \in \mathcal{C}([0,1], SO(n)) : c'(t) \neq 0, c(0) = c(1) \right\}.
\end{equation}

Basically, the closure of a curve just imposes the equality of the first and the last point of it, and not of their first derivative. Consequently, $ \mathcal{M}^{C} $ turns into  

\begin{equation}
\mathcal{M}^{c +} = \left\{ c \in \mathcal{C}([0,1], SO(n)) : c'(t) \neq 0, c(0) = c(1), c'(0) = c'(1) \right\} . 
 \end{equation}

We need now to introduce the Square Root Velocity function (SRV function) \cite{srivastava_shape_2011}, in which a curve is represented by its starting point and its normalized velocity at each time $t$. There are several possibilities to define the SRV of a curve. The more general definition is the following

\begin{equation}
\begin{aligned}
F  : \mathcal{M} & \rightarrow SO(n) \times T\mathcal{M}\\
c & \rightarrow \left( c(0), q = \dfrac{c'}{\sqrt{ \norm{c'}}} \right)
\end{aligned}
\end{equation}

 But we can go further and benefit from the specific case that we are dealing with : the data are in a Lie group, $ G = SO(n) $, in this section, we will denote the base manifold $G$ to emphasize its group structure, and $g$ is an element of the group. Recall that a Lie group G is a set that is both a smooth manifold and a group, that is this is a manifold in which the group multiplication $ G \times G \rightarrow G, (x,y) \mapsto xy $ and the inversion map, $ g \rightarrow G, x \mapsto x^{-1} $ are smooth.

Therefore, we can expect a simplification on the previous expressions. The main strength of a Lie group G is that it can be approximated by its Lie algebra $ \mathfrak{g}  $, or in other words, the Lie algebra is the best \emph{linear approximation} of a Lie group. The Lie algebra is homeomorphic to the tangent space at the identity. As in \cite{celledoni_shape_2015} we consider only curve that start at the identity: this is because other curve can be reduced to this case by right or left translation. In this settings it is interesting to turn the SRV function into the transported SRV function (TSRV). This is basically the SRV that has been parallel transport to a reference point. Different version has been given, as it can be seen in \cite{bauer_constructing_2014}, \cite{celledoni_shape_2015}, \cite{zhang_video_based_2015} . They differ by the choice of their reference point. As we study curve in a Lie group, the identity (the starting point of our curve in our case) is a particularly good choice as a reference point. In a Lie group a parallel transport operation can be defined, here again, by the right (or left) translation. This justify that we can take, as suggested in (\cite{celledoni_shape_2015})  a TSRV function of the form:

\begin{equation}
\begin{aligned}
F_{Lie} : \mathcal{C}^{\infty}([0,1], G ) & \longrightarrow  SO(n) \times \left\{ q \in \mathcal{C}^{\infty}([0,1], \mathfrak{g}), q(t) \neq 0,\ \forall t \in [0,1] \right\} \\
  F_{Lie}(c)(t) & = ( c(0), q(t)) =   \left( c(0), \dfrac{R^{-1}_{c(t) \ast}(c'(t))}{\sqrt{ \norm{ c'(t) } }} \right) = \left( c(0), \dfrac{T_{c}^{c(t) \rightarrow I}(c'(t))}{\sqrt{ \norm{ c'(t) } }} \right)
\end{aligned}
\end{equation}

where R is the right translation on the group, $ R_{g_{1}}(g_{2}) = g_{2} g_{1} $, $ R_{g \ast} = T_{e} R_{g} $ is the tangent map (the derivative equivalent on manifolds, equivalent notation : $ dR_{g} $) at the identity, and $ \norm{\cdot} $  is a norm induced by a right-invariant metric on $G$, and $ T_{c}^{c(t) \rightarrow I} $ denotes the parallel transport from $ c(t) $ to the identity according to the curve $c$. So a curve is now represented as an element of the tangent bundle $ \left(  c(0), q(t) \right) \in M \times TM $ (recall that $q$ draw a curve in the tangent bundle), and $ c(0) $ is the identity element of the Lie group. As mentioned earlier, the TSRV function is the SRV function that has been transported to a certain point, in other words, we use the natural reference point of a Lie group that consists in its identity element to define the parallel transportation, and we apply it to the tangent curve.
The inverse of the SRV function is then straightforward: for every $ q \in \mathcal{C}^{\infty}([0,1], T\mathcal{M}) $, there exists a unique curve $c$ such that $ F(c) = q $. We have $ c(t) = \int_{0}^{t}{ q(r) \norm{q(r)}dr } $ where $ \norm{\cdot} $ is the norm in $ SO(n) $.

\subsection{Metric and distance over $\mathcal{M}$ and $\mathcal{S}$}

We now give insights on a relevant metric that should be used on $ \mathcal{M} $ to compare different closed trajectories. The idea is to have a metric on $ \mathcal{M} $ that induced a "coherent" distance on the shape space $ \mathcal{S} $. The following development and expression of metrics and distance can be found in \cite{brigant_computing_2016}.  The distance on the shape space is used to compare how to curve are intrinsically different. It has been in \cite{michor_vanishing_2005} that the simple $ L^{2} $ metric on $\mathcal{M} $ given by

 \[
 g_{c}^{L^{2}}(u,v) = \int{\inner{u}{v} \norm{c'(t)} dt}
 \]
 where $ \inner{\cdot}{\cdot} $ is the Riemannian metric on $ SO(n) $ induced a vanishing metric on the shape space, that is we can not differentiate shape with this metric. Another type of metric has been investigated, which induced non-vanishing metrics on the shape space: the idea is to add higher order derivatives.  These are the \emph{elastic metric}, derived from the Sobolev metric \cite{bauer_why_2016}, \cite{gelman_bayesian_2014}, which in the case of an Euclidean space $ \mathbb{R}^{n} $: 
 
\begin{equation}
g_{c}^{a,b}(u,v) = \int{ \left(  a^{2}\inner{ D_{l}u^{N} }{ D_{l}v^{N} } + b^{2}\inner{ D_{l}u^{T} }{D_{l}v^{T}} \right) \norm{c'(t)}dt }
\end{equation}

Where: $ \inner{\cdot}{\cdot} $ is the Euclidean canonical metric on the based manifold ($ SO(n) $ in our case), $ D_{l}u = h'/\norm{c'} $, $ D_{l}u^{T} = \inner{D_{l}u}{w}w $, with $ w = c'/\norm{c'} $ and $ D_{l}u^{N} = D_{l}u - D_{l}u^{T} $ this way, $ \left( D_{l}u^{N}, D_{l}u^{N} \right) $ defines a mobile frame along the curve c, see figure (\ref{mobile frame-measure}).
 
Here we are only interesting on the special metric thats has been proposed in \cite{brigant_computing_2016}, and which is an adaptation of the elastic metric for Riemannian manifold: 

\begin{equation}
g_{c}(u,v) = \inner{u(0)}{v(0)} + \int{ \left(  \inner{ \nabla_{l}u^{N} }{ \nabla_{l}v^{N} } + \dfrac{1}{4}\inner{ \nabla_{l}u^{T} }{\nabla_{l}v^{T}} \right) \norm{(c't)}dt }
\end{equation}
 
Where $ \inner{\cdot}{\cdot} $ denotes the Riemannian metric of the original space $SO(n)$, $ \nabla $ is the Levi-Civita connection that corresponds to $ \inner{\cdot}{\cdot} $; $ \nabla_{l}u = \dfrac{1}{\norm{c'} }\nabla_{c'}h $, $ \nabla_{l}u^{T} = \inner{\nabla_{l}u}{w}w $, $ w = c'/\norm{c'} $. The computations being done now in a manifold space, the Levi-Civita connection replaced the ordinary derivative of $ \mathbb{R}^{n} $.

\begin{figure}
\label{decomposition tangent normal }
\center
\includegraphics[width = 15cm]{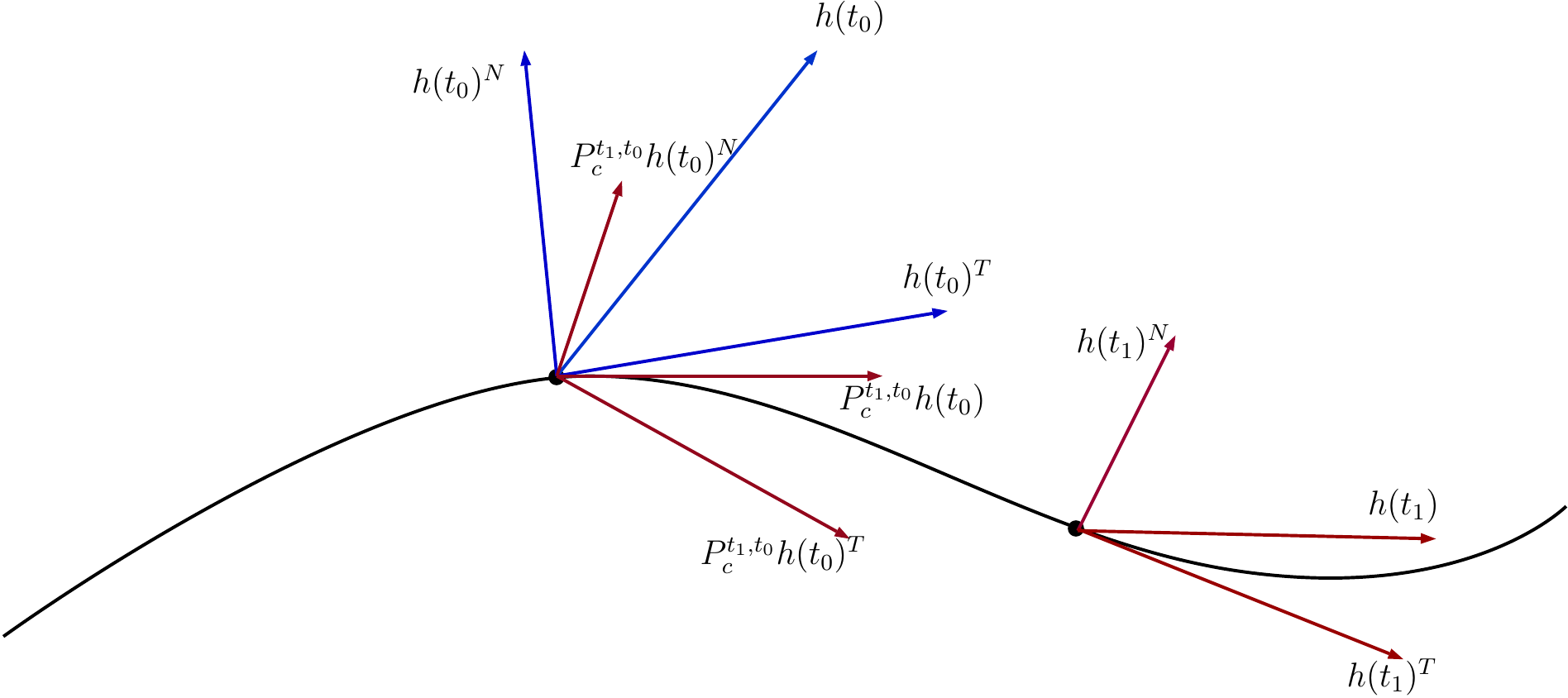}
\caption{The inner product measures the angle between a frame at a given point and the parallel transport version of this frame at a latter time.}
\label{mobile frame-measure}
\end{figure}

This metric is obtained as the pullback of the canonical $ L^{2} $ metric in the SRV framework. Recall that if we have a map $ \phi : \mathcal{U} \rightarrow \mathcal{V} $ and a one-form (that is a section) $ \alpha $ of $ T^{\ast}\mathcal{V} $, the cotangent bundle of $ \mathcal{V} $, the pullback of $ \alpha $ is the one-form \emph{on $ \mathcal{U} $} defined by : $ \left( \phi^{\ast}\alpha \right)_{x}(X) = \alpha_{\phi(x)}\left( d \phi(x) (X) \right) $, for all $ x \in \mathcal{U}, \ X\in T_{x}\mathcal{U} $. Here, $ \alpha  = F$, and $ \mathcal{U} = \mathcal{M}, \mathcal{V} =  T\mathcal{M} $ and as we the ambiant space is a Lie group we see that the metric is the pullback of the $ L^{2} $-metric on the  tangent bundle (\emph{i.e.} an inner product on the double tangent bundle TTM) defined by
\begin{equation}
\overset{\sim}{g}_{c}(f,g) = \int_{0}^{1}{  \inner{ \nabla_{s}q^{f}(0,t) }{ \nabla_{s}q^{g}(0,t) }dt  }
\end{equation}

where for any $ f \in T_{c} $ a path of curve $ s \mapsto c_{f}(s) \in \mathcal{M} $ is defined, as usually, such that $ c_{f}(0) = c, \ \left( \partial c_{f}/ \partial s \right)(0) = f  $ (recall that a tangent f vector to a point c can be seen as an equivalence class of  infinitesimal deformations of a path that passes through c at time 0). $ q^{f} $ is the TSRV function of $ c_{f} $ . Due to the transportation of the TSRV, the starting point is no longer required in the TSRV domain. Thus, from the definition of a metric on $ T\mathcal{M} $, that apply on the space of TSRV functions we can deduce a metric on the space of curves as $ g_{c}(u,v) = \overset{\sim}{g}_{F(c)}( T_{c}F(u), T_{c}(F(v) ) $, where $ T_{c}R $ denote the tangent map (the derivative) of the TSRV function at c (which carry the information about the starting point of the curve).

What is particular with this metric compared to the classical elastic metric, is that the starting point of the curves intervene explicitly, which is quite important when we deal with periodic curves, as explained before. It could be possible to add a term such that $ \inner{c'_{0}(0)}{c'_{1}(0)}_{TSO(n)} $ for some metric on $ SO(n) $ to underline the fact that curves are closed even in their first order derivatives.

One can notice that the SRV  formulation of the metric takes a quite simple form. In this framework, the length of a path of curve (notice that it is the length of a path of curve and not the length of a curve in $SO(n)$) is 

\begin{equation}
L(c) = \int_{0}^{1}{ \sqrt{  \norm{x(s)}^{2} + \int_{0}^{1}{ \norm{ \nabla_{\partial c / \partial s } q(s,t)}^{2}dt }} ds}
\end{equation}

Once the geometry has been settled in $\mathcal{M}$, the geometry of the state-space can be derived from its quotient structure (recall that the space of curve induced a fiber bundle structure above the state-space). Before we have to remember the definition of the decomposition of the tangent bundle as an horizontal and a vertical tangent bundle : given the fiber bundle $ \pi : \mathcal{M} \rightarrow \mathcal{S} $ the tangent bundle can be decomposed into a vertical and a horizontal subspace : 

\begin{equation}
T\mathcal{M} = \mathcal{H}_{\mathcal{M}} \oplus \mathcal{V}_{\mathcal{M}}
\end{equation}

with $  \mathcal{V}_{\mathcal{M}} = ker \left( T_{c} \pi \right)  $ ( $ T_{c} $ is the tangent map), and $ \mathcal{H}_{\mathcal{M}} = \left( \mathcal{V}_{\mathcal{M}} \right) ^{\bot} $ where the orthogonality has to be understood in the sense of the metric defined earlier. This metric is reparametrization invariant, that is constant along the fibers, so it can be proven that the horizontal space $ \mathcal{H}_{\mathcal{M}} $ (which concern the fibers) and the tangent space in the shape-space are \emph{isometric} : $ \pi  $ is a \emph{Riemannian submersion}. Translated in terms of metrics, we have : 

\[ 
g_{c}({u_{\mathcal{H}}, v_{\mathcal{H}}}) = [g]_{\pi(c)} \left(  T_{c}\pi (u), T_{c}\pi (v) \right) 
 \]
where $ [g] $ denote the metric \emph{on the shape space}.

\begin{figure}
\center
\includegraphics[scale = 0.5]{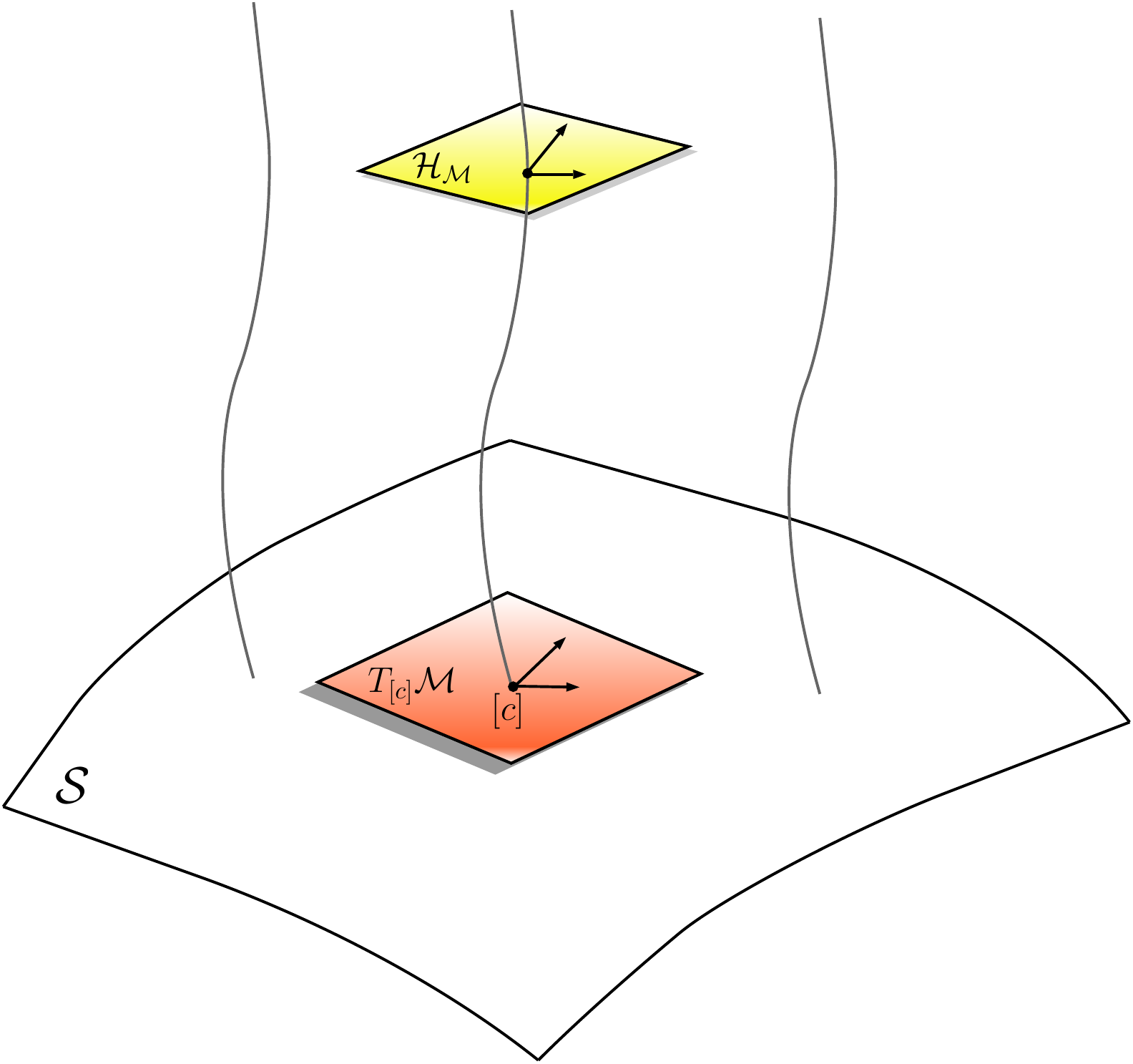}
\caption{The tangent space $T_{[c]}\mathcal{M}$ at a point $[c]$ in the shape space $\mathcal{S}$  is isomorphic to the horizontal part $\mathcal{H}_{\mathcal{M}}$ of the tangent space at a point on the associated fiber. }
\label{fiber_bundle structure}
\end{figure}

Similar result in a different (but still close) context is used in \cite{yili_li_riemannian_2013}, lemma 1. In terms of distances, this can be understood in the following sense:


 The geodesic $ s \mapsto [c](s) $ between $ [c_{0}] $ and $ [c_{1}] $ in the state-space is the projection of the horizontal geodesic linking $ c_{0} $ to the fiber containing $ c_{1} $. In fact the horizontal geodesic between $ c_{0} $ and $ c_{1} $ intersect the fiber of $ c_{1} $ at  the a re-parametrized version of $ c_{1} $, $ c_{1} \circ \phi $ which give the distance in the shape space :

\begin{equation}
[d]([c_{0}],[c_{1}]) = d_{g}(c_{0}, c_{1} \circ \phi)
\end{equation}
where $ [d] $ denote the distance in $ \mathcal{S} $, and $ d_{g} $ the distance on the space of curve induced by the aforementioned riemannian metric.
Here again a recursive procedure \cite{brigant_computing_2016} enable the computation of the horizontal geodesic. In the TSRV function formulation, the distance problem is equivalent to an optimization problem : 

\begin{equation}
\label{distance between shapes}
[d]([c_{0}],[c_{1}]) = \underset{\phi \in \mathcal{D}}{inf}\left(  \int_{0}^{1}{  \norm{  q_{0}(t) - q_{1}(\phi(t))\sqrt{\phi'(t)} }^{2}  } \right)^{1/2}
\end{equation}
where $ q_{i}(\cdot) = F(c_{i})(\cdot) $. As these curves in the tangent space had been transported according to the same point, which is the identity element, they lived in the same space and can be compared. In \cite{celledoni_shape_2015} the author used a dynamic linear programming to solve this linear optimization problem, they construct a piecewise linear approximation of $ \phi $. A traditional gradient descent algorithm is also possible.\\

Last but not least, we have to mentioned that in a practical situation, the above formula have to be discretized. This is the object of \cite{brigant_discrete_2017}. Formulas are essentially similar, but in this setting, a curve is now represented by a set of points $ c_{disc}(x_{0}, x_{1}, \cdots, x_{n}) $ and the tangent space turns into

\[ 
T_{disc}\mathcal{M} = \left\{ v = (v_{0}, v_{1}, \cdots,  v_{n}), v_{i} \in T_{x_{i}}SO(n),\ \forall i \right\}.
 \]

Concerning the metric on the space of curve, it becomes

\begin{equation}
\label{discrete metric on curve space}
g_{c_{disc}}(u,v) = \inner{u_{0}}{v_{0}} + \dfrac{1}{n} \sum_{i = 0}^{n-1}{ \inner{ \nabla_{ \partial c / \partial s }q^{u}\left(0, \dfrac{k}{n}\right) }{ \nabla_{ \partial c / \partial s }q^{v}\left(0, \dfrac{k}{n}\right) }  } \quad \forall u,v \in T_{disc}\mathcal{M}
\end{equation}

where, as before, for a $ u \in T_{c_{disc}}\mathcal{M} $, we define a path of piecewise geodesic curves $ (s,t) \mapsto c^{u}(s,t) $ such that the following traditional initial condition are fulfilled, \emph{i.e.}
\begin{eqnarray*} 
c^{u}\left( 0,\dfrac{k}{n} \right) & = & x_{k}, \text{  and  } \\
\left( \partial c^{u} / \partial t \right) \left( 0,\dfrac{k}{n} \right) & = & n\log_{x_{k}}(x_{k+1}).
\end{eqnarray*}
 This is the discrete analogue of the tangent vector of a continuous curve at time $t$. The $ \log$ function is the inverse of the exponential map on the base manifold, $SO(n)$ for us, and here $ c^{u}\left(s,\cdot \right) $ must be a geodesic (on $SO(n)$) between $ x_{k/n} $ and $ x_{(k+1)/n} $. Thus the SRV functions that appears in the formula refers to the SRV function of the piecewise geodesics $ c^{u}\left(s,\cdot \right) $. Then, the discretized version of the SRV function, $ q_{k} = \sqrt{n} \; \log_{x_{k}}( x_{k+1}) / \sqrt{ \norm{ \log_{x_{k}}(x_{k+1}) } } $ is such that 
\begin{equation}
\nabla_{ \partial c / \partial s} q\left( s, \dfrac{k}{n} \right)  = \nabla_{ \partial c / \partial s} q_{k}(s)
\end{equation}


\subsection{The geodesic equation in the Lie group case}

Before giving the geodesic equation in the space of curves on a Lie group, we start with some preliminaries. We recall some useful facts about Lie group and Lie algebra, for those who are not familiar with these object.\\

A metric $ \inner{\cdot}{\cdot} $ on a Lie group is said to be left invariant if :

\[ 
\inner{u}{v}_{b} = \inner{(dL_{a})_{b}u }{ (dL_{a})_{b}v }_{ab}
 \]
where $ (dL_{a})_{b} $ is the derivative in the manifold field sense (so the tangent map) of the left translation $ L_{a} $ at $b$. A left invariant metric give the same number whenever the vectors are translated on the left. It is straightforward to adapt this definition to a right-invariant metric. A metric that is both left and right-invariant is called a bi-invariant metric. A Lie group endowed with a bi-invariant metric has plenty of import properties that we are can be exploited for our study to curve on shape spaces. We list some of them in the following

\begin{itemize}
\item
The geodesics through $e$ (the identity element) are the integral curves $ t \mapsto exp(tu),\ u \in \mathfrak{g}  $,i.e. the one-parameter groups. Also, since left and right are isometries and isometries maps geodesics to geodesics, the geodesics through any point $ a \in G $ are the left (right) translates of the geodesics through e

\[ 
\gamma(t) = L_{a} \left( exp(tu) \right),\ u \in \mathfrak{g}.
\] 
 Of course we have
\[ 
\gamma'(0) = \left( dL_{a} \right)e(u).
\]
\item
The Levi-Civita connection is given by : $ \nabla_{X}Y = \dfrac{1}{2} [X,Y] , \ \forall X,Y \in \mathfrak{g} $
\item
The curvature tensor is given by : $ R(u,v)w = \dfrac{1}{4}[[u,v],w] $   
\end{itemize}

where $ [\cdot,\cdot] $ denotes the Lie bracket. We can now link these formulas to our based manifold $SO(n)$. \\
A Killing form, $B$, of a Lie algebra is a symmetric  bilinear form $ B : \mathfrak{g} \times \mathfrak{g} \longrightarrow \mathbb{C} $ given by $B(u,v) = tr(ad(u) \circ ad(v))$, where $tr$ denote the trace operator and $ad$ the adjoint representation of the group, namely the map $ ad : G \longrightarrow GL(\mathfrak{g}) $ such that, for all $ a \in G $  $ ad_{a} : \mathfrak{g} \longrightarrow \mathfrak{g} $ is the \emph{linear isomorphism} defined by  $ ad_{a} = d(R_{a}^{-1} \circ L_{a})_{e} $. If now we assume $B$ to be negative-definite, then -$B$ is an inner product and is adjoint-invariant. So, it is a classical result of the Lie theory that -$B$ induces a bi-invariant metric on $G$. Furthermore, the Ricci curvature is given by $ Ric(u,v) = -\dfrac{1}{4}B(u,v) $. 

The Lie algebra of $ SO(n) $ is the set of skew-symmetric matrices, that is matrices $M$ which verifies $ M^{T} = -M $.
The Killing form on $ SO(n) $ is given by : $ B_{\mathfrak{so}(n)} = (n-2)tr(XY) $, and due to the skew-symmetry, we have $ -B_{\mathfrak{so}(n)} = (n-2)tr(XY^{T}) $. Therefore, it induces a bi-invariant metric and the previous formula can be plugged in expression of the metric on the space of curve or on the shape space. To conclude these preliminaries, we see that due to the simpler form of the parallel transportation and of the metric, the distance equations (\ref{distance between shapes}) are now easier to handle. \\

It is now time to give the geodesic equation, relative to our chosen measure. Due to the TSRV, the geodesic equation takes a much simpler form than what can be found in \cite{brigant_computing_2016} and \cite{brigant_discrete_2017}. The formula can be found in \cite{celledoni_shape_2015}. For the sake of completeness, we give a reformulated proof in Annex \ref{AnnexB}. Recall that a geodesic is a particular path of curves. A path of curve is a continuous set of curve $ s \mapsto c(s, \cdot) $ such that for each $s$, $ c(s,\cdot) $ is a point in $ \mathcal{M} $, or equivalently a curve in $ M $, see figure (\ref{path of curves}). So, for each curve of the path of curves we can defined its TSRV function, then for all $ s\in [0,1] $ we have (we omit the letter '$s$' for clarity): $ q = \dfrac{  \partial c / \partial t }{ \sqrt{ \norm{ \partial c / \partial t  } } } $
\begin{thm}
A path of curves  $ [0,1] \ni s \mapsto \left( c(s,0) , q(s,t) \right)$ (t is the parameter of the curve $ c(s,\cdot) $)is a geodesic on $ \mathcal{M} $ if and only if
\begin{equation}
\nabla_{\partial c / \partial s  } \left( \nabla_{\partial c / \partial s } q(s,t) \right) (s,t)  = 0 \quad \forall s,t
\end{equation}
 \end{thm}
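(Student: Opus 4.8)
The plan is to recognize this as the standard characterization of geodesics via the vanishing of the covariant acceleration, but carried out in the TSRV/SRV representation where the pullback metric $\widetilde g$ on $T\mathcal M$ is the flat $L^2$-metric
\[
\widetilde g_c(f,g) = \int_0^1 \inner{\nabla_s q^f(0,t)}{\nabla_s q^g(0,t)}\,dt .
\]
The key point is that, once a curve is represented by its (transported) SRV function $q(s,t)$, the space of such $q$ is an open subset of the vector space $\mathcal C^\infty([0,1],\mathfrak g)$ (since $G$ is a Lie group with a bi-invariant metric, the parallel transport to the identity identifies each $T_{c(t)}G$ with $\mathfrak g$). On a flat $L^2$-type space the energy functional of a path $s\mapsto q(s,\cdot)$ is $E = \tfrac12\int_0^1\!\!\int_0^1 \norm{\nabla_s q(s,t)}^2\,dt\,ds$ plus the endpoint term $\tfrac12\norm{x(s)}^2$ coming from $\inner{u(0)}{v(0)}$, and its critical points are exactly the paths whose second covariant $s$-derivative vanishes.

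**First I would** set up the energy functional $E(c) = \tfrac12\int_0^1 g_{c(s,\cdot)}\big(\partial_s c,\partial_s c\big)\,ds$ and use the fact, recalled in the excerpt, that $g$ is the pullback of $\widetilde g$ under the TSRV map $F$; hence
\[
E(c) = \tfrac12\int_0^1 \Big( \norm{x(s)}^2 + \int_0^1 \norm{\nabla_{\partial c/\partial s} q(s,t)}^2\,dt\Big)\,ds,
\]
where $x(s) = c(s,0)$ traces the starting points. Then I would compute the first variation with respect to a compactly supported variation $s\mapsto c_\varepsilon(s,\cdot)$ fixing the endpoints $s=0,1$. The interior term is handled by integration by parts in $s$: using metric compatibility of the Levi-Civita connection $\nabla$ and the symmetry $\nabla_{\partial_\varepsilon}\partial_s c = \nabla_{\partial_s}\partial_\varepsilon c$ (torsion-freeness), one gets $\tfrac{d}{d\varepsilon}\big|_0 E = -\int_0^1\!\!\int_0^1 \inner{\nabla_{\partial_s}\nabla_{\partial_s} q}{\,\partial_\varepsilon q\,}\,dt\,ds$ up to boundary contributions that vanish by the fixed-endpoint condition. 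Since $\partial_\varepsilon q$ ranges over a dense set of variations in $T_q$ (the TSRV map $F$ is a diffeomorphism onto its image, so variations of $c$ correspond bijectively to variations of $q$), the Euler–Lagrange equation is precisely $\nabla_{\partial c/\partial s}\big(\nabla_{\partial c/\partial s} q(s,t)\big) = 0$ for all $s,t$. The boundary-point term $\tfrac12\norm{x(s)}^2$ contributes a geodesic equation for $x(\cdot)$ on $G$ that is absorbed into the $t=0$ case of the same equation, since $q$ encodes the velocity there.

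**The main obstacle** I expect is the careful bookkeeping of the two covariant derivatives that appear — $\nabla_s$ acting on $q$ (a curve in $T\mathcal M$, i.e. second covariant derivatives along the map $(s,t)\mapsto c(s,t)$) versus $\nabla$ on the base $G$ — and in particular justifying the interchange $\nabla_{\partial_\varepsilon}\nabla_{\partial_s} = \nabla_{\partial_s}\nabla_{\partial_\varepsilon} + R(\partial_\varepsilon c,\partial_s c)$ and showing that the curvature term drops out of the final variation (it does, because it pairs antisymmetrically and the variation is taken at a critical point, or more directly because one only needs the $\partial_\varepsilon$ and $\partial_s$ directions which commute as coordinate vector fields on the parameter square). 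A secondary technical point is verifying that $F$ (resp.\ $F_{Lie}$) is a bijection with smooth inverse onto the space of non-vanishing $\mathfrak g$-valued curves — the excerpt already gives the inverse formula $c(t) = \int_0^t q(r)\norm{q(r)}\,dr$, so this reduces to checking smoothness and that the SRV normalization is invertible away from zero velocity, which is routine. Once these are in place, the equivalence "geodesic $\iff$ $\nabla_{\partial c/\partial s}\nabla_{\partial c/\partial s} q = 0$" follows from the standard fact that geodesics are exactly the critical points of the energy functional, and I would close by citing the reformulated proof deferred to Annex~\ref{AnnexB}.
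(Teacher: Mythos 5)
Your approach is essentially the one the paper itself uses in Annex~\ref{AnnexB}: write the energy of a path in the (T)SRV representation, take the first variation over fixed-endpoint deformations, commute the covariant derivatives, integrate by parts in $s$, and read off the Euler--Lagrange equation $\nabla_{\partial c/\partial s}\nabla_{\partial c/\partial s}\,q = 0$. One step of your justification is off, though it does not change the conclusion. You claim the curvature term drops out ``because it pairs antisymmetrically and the variation is taken at a critical point, or \ldots because $\partial_\varepsilon$ and $\partial_s$ commute as coordinate vector fields.'' The commutation of the coordinate fields only gives the symmetry of the \emph{first} covariant derivatives, $\nabla_{\partial_\varepsilon}\partial_s c = \nabla_{\partial_s}\partial_\varepsilon c$; the second covariant derivatives do not commute, and the curvature term $\mathcal R(\partial_\varepsilon c,\partial_s c)q$ genuinely survives the computation. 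After the symmetry $\inner{\mathcal R(X,Y)Z}{W}=\inner{\mathcal R(Z,W)X}{Y}$ it ends up paired with the variation of the starting point $\partial_\varepsilon c(s,0)$, and in the untransported SRV setting it does \emph{not} vanish: it produces the additional equation $\nabla_{\partial c/\partial s}c(s,0)+\int_0^1 \mathcal R\bigl(q,\nabla_{\partial c/\partial s}q\bigr)\bigl(c(s,0)\bigr)\,dt=0$, which is the first half of the general theorem in Annex~\ref{AnnexB}. The reason only the displayed equation remains in the statement you are proving is that the TSRV framework pins the starting point to the identity for all $s$, so $\partial_\varepsilon c(s,0)\equiv 0$ and everything paired with it is killed; your alternative suggestion that the starting-point term is ``absorbed into the $t=0$ case'' of the $q$-equation is not how it works. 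With that correction your argument matches the paper's.
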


\begin{proof}
Annex \ref{AnnexB}
\end{proof}
Thus, we have a quite familiar expression for the geodesic interpolation between two curves $ c_{0} $ and $ c_{1} $, expressed in their TSRV domain:
\begin{equation}
F_{Lie}^{-1} \left( (1-s)F_{Lie}(c_{0}) + s F_{Lie}(c_{1}) \right)
\end{equation}
for $ s \in [0,1] $. This expression is nothing less than a linear interpolation on the transported tangent spaces. \\

Finally we can give a slightly modified version of the algorithm of geodesic shooting. As we have seen for exemple in equation (\ref{discrete metric on curve space}) that a piecewise (continuous) geodesic is required to express the metric. This is because the expression are formally constructed with initial conditions (initial point and initial tangent vector). In \cite{celledoni_shape_2015} an exemple is given for curves on SO(3). We give here a simplified version of their construction. Starting from a continuous curve, they discretize it as $ \left\{ x_{0}, \cdots x_{n} \right\} $ and then construct a piecewise geodesic c between these points: 

\begin{equation}
c(t) = \sum_{k = 0}^{n-1}{ \chi_{k,k+1}(t) exp \left( (t-k) log_{c_{k}}(c_{k+1}) \right)c_{k} }
\end{equation}

Notice that this geodesic is in the base manifold, and not in the space of curve. Thus, the geodesics are expressed in terms of one-parameter groups. But in our case, we already have a discrete curve. The problem is the number of points that we have. Results exists that show the convergence of the discrete case to the continuous case, but when we deal with positive definite matrice with a certain periodicity, we may have not enough matrices. So, a step has to be added, which is another interpolation step. But in order to have a curve at least $ \mathcal{C}^{1} $, and also because distance between the  $ W $ matrices can be quite high, we interpolate first by spline. There are many way to thought spline interpolation on manifold, but one of the simplest is to interpolate in the tangent space, which is Euclidean, and to go back to the manifold via the exponential map. \\

Therefore, our procedure to compare curves on manifold arising from positive definite matrices is the following : 

\begin{enumerate}
\item
$\textbf{Input}$ : a set of rotation matrices $\left\{ W_{i} \right\}_{i}$ , seen as a partially observation of a periodic trajectory on $SO(n)$.
\item
Interpolate with splines between matrices $ W_{i} $.
\item
Truncate the previous curve to obtain a sequence of points $ \left\{ x_{0}, \cdots, x_{n} \right\} $ with a finer discretization.
\item
Interpolate between these points to obtain a piecewise geodesic curve \emph{in the base manifold} $SO(n)$.
\item
Apply geodesic shooting to obtain a geodesic path\emph{ between two curves}, and eventually between the shapes of the two curves.
\item
$ \textbf{Output} $ : distance between two curve in the manifold defined by the set of curves in $SO(n)$
\end{enumerate}

Formally, we have to add a preliminary step when one deal's with curve that do not starts at the identity. We can either compute the TSRV representation of the curve and then transport this representation to the Lie algebra, or first translate the curve to a curve that starts at the identity. 
To end this part, we mention that in the general framework of the dilation, we deal with operators of infinite size. These operators are unitary. So, computations are to be adapted: and the inner product on the tangent space is now derived from the Hilbert Schmidt norm : 

\begin{equation}
\norm{A}_{H.S} = tr(AA^{\ast}) = \sum_{k = 1}^{+\infty}{ \lambda_{k}(AA^{\ast}) }
\end{equation}
 where A is a trace class operator, and $ \left\{  \lambda_{k}(AA^{\ast}) \right\}_{k} $ is the eigenvalues of $ AA^{\ast}$. This is a generalization of the Frobenius norm to the infinite dimensional case. \\
As these operator give rise to the spectral measure of the underlying process, we therefore have a way to compare spectral measures, in a geometrical way that is different from the "classical" use of Fisher information metric or Kullback-Leibler distance in the information geometry context.


\section{Conclusion}\label{section:conclu}

We have introduced a new vision of stochastic processes through the geometry induced by the dilation. The dilation matrices of a given processes are obtained by a composition of rotations whose angle correspond to the well-known reflexion coefficients. The advantage to work with these particular matrices is that they are strongly related to the stochastic measure of the process, and thus to its spectra. Furthermore, the dilation theory is independent of the stationarity of the underlying process: when the signal is stationary, its dilation operator is related to the Naimark dilation whereas when the signal is non-stationary, a set of dilation matrices is obtained and it is related to the Kolmogorov decomposition. Rigorously, dilation matrices are infinite dimensional, though we turn them into rotation matrices by truncation. Each of them belongs so to the Special Orthogonal Group $SO(n)$ or the Special Unitary Group $SU(n)$ depending on the real or complex-valued process under study. We focus our attention on the Periodically Correlated (PC) class of non-stationary processes for which a timely ordered set of dilation matrices describes the process measure. This set draws a closed curve on the Lie group of rotation matrices, and describing or classifying the different PC processes is made by curves comparison. We use for that the Square Root Velocity (SRV) function which represents a curve by its starting point and by its normed velocity vector on the space or curves. The metric in the space of curve naturally extent to the space of shapes. It is then possible to compare the shape of curves when the metric is translated to the Lie algebra, achieving therefore a closed-form expression and easy computation.



\appendix

\section{Defect operator, elementary rotation}\label{annexe1}

Introducing the \emph{defect operator} of a contraction $T$ as being $D_{T} = (I-T^{\ast}T)^{1/2}$, we have the following factorization :

\begin{equation}
\begin{pmatrix}
X & Y \\
Y^{\ast} & Z
\end{pmatrix} =
\begin{pmatrix}
X^{1/2} & 0\\
Z^{1/2} \Gamma^{\ast} & Z^{1/2}D_{\Gamma}
\end{pmatrix} 
\begin{pmatrix}
X^{1/2} & \Gamma Z^{1/2}\\
0 & D_{\Gamma}Z^{1/2}
\end{pmatrix}
\end{equation}
Where $X$ and $Y$ are positive matrices.
Note that this is  a Cholesky factorization-type result. This type of decomposition is used as  squared-root of matrices in the construction of the dilation. A corollary is that the operator 
$\begin{pmatrix}
I & T\\
T^{\ast} & I
\end{pmatrix}
$ is positive if and only if T is a contraction.

\begin{thm}\label{theo4}
Let X and Y be operators in $z$. The following statements are equivalent : 
\begin{itemize}
\item  There exists a contraction $\Gamma$ in $z$ such that $X = \Gamma Y$
\item
$X^{\ast}X \leqslant Y^{\ast} Y$.
\end{itemize}
\end{thm}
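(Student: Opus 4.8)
The plan is to establish the two implications separately; one is immediate and the other carries the content of the statement.

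\emph{From the contraction to the inequality.} Suppose $X = \Gamma Y$ with $\norm{\Gamma} \leq 1$, which is the same as $\Gamma^{\ast}\Gamma \leq I$. I would simply compute
\begin{equation}
X^{\ast}X = Y^{\ast}(\Gamma^{\ast}\Gamma) Y \leq Y^{\ast} Y,
\end{equation}
using the elementary fact that $S \leq T$ implies $A^{\ast}SA \leq A^{\ast}TA$ for every operator $A$ (here with $A = Y$, $S = \Gamma^{\ast}\Gamma$, $T = I$). This gives $X^{\ast}X \leq Y^{\ast}Y$.

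\emph{From the inequality to the contraction.} Assume now $X^{\ast}X \leq Y^{\ast}Y$, equivalently $\norm{Xh}^{2} \leq \norm{Yh}^{2}$ for all $h$. I would define $\Gamma$ first on the (not necessarily closed) subspace $\mathcal{R}(Y)$ by the rule $\Gamma(Yh) = Xh$. Well-definedness is the one place where the hypothesis is genuinely used: if $Yh_{1} = Yh_{2}$ then $Y(h_{1}-h_{2}) = 0$, hence $\norm{X(h_{1}-h_{2})}^{2} \leq \norm{Y(h_{1}-h_{2})}^{2} = 0$, so $Xh_{1} = Xh_{2}$; in other words $\ker Y \subseteq \ker X$ and the assignment makes sense. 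The same inequality gives $\norm{\Gamma(Yh)} = \norm{Xh} \leq \norm{Yh}$, so $\Gamma$ is a linear map on $\mathcal{R}(Y)$ with $\norm{\Gamma} \leq 1$. Being bounded, it extends uniquely by continuity to a contraction on the closure $\overline{\mathcal{R}(Y)}$, with range in $\overline{\mathcal{R}(X)}$; setting $\Gamma$ equal to $0$ on the orthogonal complement $\overline{\mathcal{R}(Y)}^{\perp} = \ker(Y^{\ast})$ produces a contraction $\Gamma$ on the whole space (the norm does not increase since the two pieces act on orthogonal subspaces). By construction $\Gamma Y h = Xh$ for every $h$, i.e. $X = \Gamma Y$, completing the equivalence.

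The only subtle point is the well-definedness of $\Gamma$ on $\mathcal{R}(Y)$; everything else is the standard bounded-linear-extension argument together with a trivial computation. I would also remark that if one prefers $\Gamma \in \mathcal{L}(\mathcal{R}(Y), \mathcal{R}(X))$ as in Theorem~\ref{theo:1}, rather than an operator on the whole space, one simply stops before the extension-by-zero step. Note finally that the corollary stated just before the theorem ($\begin{psmallmatrix} I & T \\ T^{\ast} & I \end{psmallmatrix} \geq 0 \iff T$ a contraction) and Theorem~\ref{theo4} together recover the essential uniqueness in Theorem~\ref{theo:1}: the factorization there exhibits $Y = X^{1/2}\Gamma Z^{1/2}$, and Theorem~\ref{theo4} pins down $\Gamma$ on the relevant ranges.
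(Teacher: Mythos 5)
Your proof is correct and follows essentially the same route as the paper, which defines $\Gamma$ on the range by the assignment $\Gamma(Yh)=Xh$ and extends (the paper's one-line sketch writes $\Gamma Xh = Yh$, with $X$ and $Y$ apparently transposed by typo, and defers details to the cited reference). Your version simply fills in the standard well-definedness, boundedness, and extension arguments, plus the easy converse direction.
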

\begin{proof}
This result can be proved by taking the contraction $\Gamma$ with respect to $\Gamma Xh = Yh $, \cite{tseng_contractions_2006}.
\end{proof} 
As a corollary, If $X^{\ast}X = Y^{\ast}Y$ then there exists a partial isometry $V$ such that $VX=Y$. It is easy to see that we can chose $V$ to be the contraction $\Gamma$ defined above. Isometry $V$ can also be assumed unitary. For a positive operator $A\in \mathcal{L}(\mathcal{H})$, if we denote by $A^{1/2}$ its unique positive square root, then every $L$ such that $L^{\ast}L = A$ is related to $A^{1/2}$ by $A^{1/2} = VL$ (or $A^{1/2} = L^{\ast}V^{\ast}$).\\
Let us state another theorem that intervene much in Constantinescu's factorization of positive definite kernel. Note that in the next, $\mathcal{R}(\Gamma)$ will denote the close range of the operator $\Gamma$. We first start with a basic case:

\begin{thm}[row contraction]\label{identification operator square}
Let $T=[T_{1}\quad T_{2}]\ \in \mathcal{L}(\mathcal{H}_{1}\oplus \mathcal{H}_{2}, \mathcal{H} )$, then $\norm{T}\leqslant 0 $ if and only if there exists contractions $\Gamma_{1} \in  \mathcal{L} (\mathcal{H}_{1},\mathcal{H} )$ and $\Gamma_{2} \in  \mathcal{L} (\mathcal{H}_{2},\mathcal{H} )$ such that \[ T = [ \Gamma_{1} \quad D_{\Gamma_{1}^{\ast}}\Gamma_{2} ] \]
\end{thm}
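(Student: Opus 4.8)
The statement is the base case of the row-contraction parametrization (I read the hypothesis as $\norm{T}\leqslant 1$), and the plan is to prove the two implications separately, using Theorem~\ref{theo4} as the only nontrivial input.

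\textbf{Forward direction.} Suppose $\norm{T}\leqslant 1$. Since $\norm{T}^{2}=\norm{TT^{\ast}}$ and $TT^{\ast}=T_{1}T_{1}^{\ast}+T_{2}T_{2}^{\ast}$, the contraction hypothesis is equivalent to the operator inequality $T_{1}T_{1}^{\ast}+T_{2}T_{2}^{\ast}\leqslant I$. I set $\Gamma_{1}:=T_{1}$; then $T_{1}T_{1}^{\ast}\leqslant I$ gives $\norm{\Gamma_{1}}\leqslant 1$, so $D_{\Gamma_{1}^{\ast}}=(I-\Gamma_{1}\Gamma_{1}^{\ast})^{1/2}$ is well defined and $D_{\Gamma_{1}^{\ast}}^{2}=I-T_{1}T_{1}^{\ast}$. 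From the displayed inequality $T_{2}T_{2}^{\ast}\leqslant I-T_{1}T_{1}^{\ast}=D_{\Gamma_{1}^{\ast}}^{2}$, i.e. $(T_{2}^{\ast})^{\ast}(T_{2}^{\ast})\leqslant (D_{\Gamma_{1}^{\ast}})^{\ast}(D_{\Gamma_{1}^{\ast}})$. Applying Theorem~\ref{theo4} with $X=T_{2}^{\ast}$ and $Y=D_{\Gamma_{1}^{\ast}}$ produces a contraction $\Lambda$ with $T_{2}^{\ast}=\Lambda\, D_{\Gamma_{1}^{\ast}}$; taking adjoints and putting $\Gamma_{2}:=\Lambda^{\ast}$ gives $T_{2}=D_{\Gamma_{1}^{\ast}}\Gamma_{2}$ with $\norm{\Gamma_{2}}=\norm{\Lambda}\leqslant 1$. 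Hence $T=[\,\Gamma_{1}\ \ D_{\Gamma_{1}^{\ast}}\Gamma_{2}\,]$ with $\Gamma_{1},\Gamma_{2}$ contractions.

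\textbf{Converse.} Suppose $T=[\,\Gamma_{1}\ \ D_{\Gamma_{1}^{\ast}}\Gamma_{2}\,]$ with $\Gamma_{1},\Gamma_{2}$ contractions. Then $TT^{\ast}=\Gamma_{1}\Gamma_{1}^{\ast}+D_{\Gamma_{1}^{\ast}}\Gamma_{2}\Gamma_{2}^{\ast}D_{\Gamma_{1}^{\ast}}$. Since $\Gamma_{2}\Gamma_{2}^{\ast}\leqslant I$ and conjugation by the self-adjoint operator $D_{\Gamma_{1}^{\ast}}$ preserves operator inequalities, $D_{\Gamma_{1}^{\ast}}\Gamma_{2}\Gamma_{2}^{\ast}D_{\Gamma_{1}^{\ast}}\leqslant D_{\Gamma_{1}^{\ast}}^{2}=I-\Gamma_{1}\Gamma_{1}^{\ast}$, so $TT^{\ast}\leqslant I$ and therefore $\norm{T}\leqslant 1$.

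\textbf{On the difficulty.} There is no genuine obstacle: the content is carried entirely by Theorem~\ref{theo4}, and the rest is the C*-identity $\norm{T}^{2}=\norm{TT^{\ast}}$, the block computation of $TT^{\ast}$, and the monotonicity $B\leqslant C\Rightarrow ABA^{\ast}\leqslant ACA^{\ast}$. The only point deserving care is the placement of adjoints when invoking Theorem~\ref{theo4}, whose statement factors on the \emph{left} ($X=\Gamma Y$) while the desired factorization $T_{2}=D_{\Gamma_{1}^{\ast}}\Gamma_{2}$ has the fixed positive factor on the left, forcing the passage to $T_{2}^{\ast}$. Note also that, unlike Theorem~\ref{theo:1}, no uniqueness of $(\Gamma_{1},\Gamma_{2})$ is claimed: $\Gamma_{1}=T_{1}$ is forced, but $\Gamma_{2}$ is only determined on $\overline{\mathcal{R}(D_{\Gamma_{1}^{\ast}})}$, so the extension of $\Lambda$ off that range may be chosen arbitrarily (e.g. as $0$).
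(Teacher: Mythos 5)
Your proof is correct and the forward direction is essentially identical to the paper's: set $\Gamma_{1}=T_{1}$, deduce $T_{2}T_{2}^{\ast}\leqslant D_{\Gamma_{1}^{\ast}}^{2}$, and invoke Theorem~\ref{theo4} on $T_{2}^{\ast}$ to produce $\Gamma_{2}$ as the adjoint of the resulting contraction. You additionally write out the (easy) converse, which the paper leaves implicit, and you correctly identify the typo $\norm{T}\leqslant 0$ for $\norm{T}\leqslant 1$.
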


\begin{proof}
The proof is a simple application of theorem \ref{theo4}. For the if part, it is obvious that we can take $\Gamma_{1}$ to be $T_{1}$. Then $\norm{T} \leqslant 1$ implies
\[  I - TT^{\ast} = I - \Gamma_{1}\Gamma_{1}^{\ast} - T_{2}T_{2}^{\ast} \geqslant 0 \] with $D^{2}_{\Gamma_{1}^{\ast}} \geqslant T_{2}T_{2}^{\ast} $. There exists so $\Delta$ such that $\Delta D_{\Gamma_{1}^{\ast}} = T_{2}^{\ast}$. Choosing $\Gamma_{2} = \Delta^{\ast}$ finishes the argument.
\end{proof}

In the same way as that of the Cholesky factorization, we can write down the defect operator for the whole contraction $T=[T_{1} \quad T_{2}]$ \cite{tseng_contractions_2006} to be
\begin{equation}
D_{T}^{2} = \begin{pmatrix}
D_{\Gamma_{1}} & 0\\
-\Gamma_{2}^{\ast} \Gamma_{1} & D_{\Gamma_{1}}
\end{pmatrix} 
\begin{pmatrix}
D_{\Gamma_{1}} & -\Gamma_{1}^{\ast}\Gamma_{2}\\
0 & D_{\Gamma_{1}}.
\end{pmatrix}
\end{equation}

Therefore, with the theorem \ref{identification operator square} we have an operator $\alpha$ such that \[ D_{T} = \begin{pmatrix}
D_{\Gamma_{1}} & 0\\
-\Gamma_{2}^{\ast} \Gamma_{1} & D_{\Gamma_{1}}
\end{pmatrix} \alpha \]
 Similarly, \[ D_{T^{\ast}}^{2} = ( D_{\Gamma_{1}^{\ast}}D_{\Gamma_{2}^{\ast}}D_{\Gamma_{2}^{\ast}}D_{\Gamma_{1}^{\ast}}  ) \]

and the general case is

\begin{thm}[Structure of  row contration]
The following are equivalent :
\begin{itemize}
\item The operator $T^{n} = [  T_{1}\quad T_{2}\quad \cdots T^{n} ]$ in $\mathcal{L}( \oplus^{n}_{k=1} \mathcal{H}_{k},\mathcal{H}' )$ is a contraction
\item
$T_{1} = \Gamma_{1}$ is a contraction and, for $k > 2$, there exists uniquely determined contractions $\Gamma_{k}\ \in \mathcal{L}( \mathcal{H}_{k},\mathcal{R}(\gamma_{k}) )$ such that $T_{k} = D_{\Gamma_{1}^{\ast}} D_{\Gamma_{2}^{\ast}} \cdots D_{\Gamma_{k-1}^{\ast}} \Gamma_{k}$
\end{itemize}
\end{thm}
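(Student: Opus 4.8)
The plan is to prove the equivalence by induction on $n$, peeling off the last block and invoking the two-block results already established. The base cases are in hand: $n=1$ is trivial (one takes $\Gamma_{1}=T_{1}$), and $n=2$ is precisely Theorem~\ref{identification operator square}. For the inductive step, write $T^{n}=[\,T^{n-1}\quad T_{n}\,]$ with $T^{n-1}=[\,T_{1}\ \cdots\ T_{n-1}\,]\in\mathcal{L}(\oplus_{k=1}^{n-1}\mathcal{H}_{k},\mathcal{H}')$ and apply Theorem~\ref{identification operator square} to this two-block splitting: $T^{n}$ is a contraction if and only if $T^{n-1}$ is a contraction and there exists a contraction $\Gamma_{n}$, uniquely determined on $\mathcal{R}(D_{(T^{n-1})^{\ast}})$, with $T_{n}=D_{(T^{n-1})^{\ast}}\Gamma_{n}$.

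Next I would feed in the induction hypothesis, which says that $T^{n-1}$ is a contraction exactly when there are uniquely determined contractions $\Gamma_{1},\dots,\Gamma_{n-1}$ with $T_{k}=D_{\Gamma_{1}^{\ast}}\cdots D_{\Gamma_{k-1}^{\ast}}\Gamma_{k}$ for $1\le k\le n-1$. The only remaining ingredient is an identification of the defect operator $D_{(T^{n-1})^{\ast}}$. Iterating the two-block defect factorization displayed just before the present statement (there $D_{T^{\ast}}^{2}=D_{\Gamma_{1}^{\ast}}D_{\Gamma_{2}^{\ast}}D_{\Gamma_{2}^{\ast}}D_{\Gamma_{1}^{\ast}}$, i.e.\ $D_{T^{\ast}}^{2}=PP^{\ast}$ with $P=D_{\Gamma_{1}^{\ast}}D_{\Gamma_{2}^{\ast}}$), one obtains in general $D_{(T^{n-1})^{\ast}}^{2}=PP^{\ast}$ with $P=D_{\Gamma_{1}^{\ast}}\cdots D_{\Gamma_{n-1}^{\ast}}$. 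Since $D_{(T^{n-1})^{\ast}}$ is the positive square root, $D_{(T^{n-1})^{\ast}}^{2}=D_{(T^{n-1})^{\ast}}(D_{(T^{n-1})^{\ast}})^{\ast}$ and $PP^{\ast}$ agree, so the corollary to Theorem~\ref{theo4} ($X^{\ast}X=Y^{\ast}Y$ gives a partial isometry intertwining $X$ and $Y$) yields a partial isometry $V$ with $D_{(T^{n-1})^{\ast}}=PV^{\ast}=D_{\Gamma_{1}^{\ast}}\cdots D_{\Gamma_{n-1}^{\ast}}V^{\ast}$. Substituting into $T_{n}=D_{(T^{n-1})^{\ast}}\Gamma_{n}$ and absorbing $V^{\ast}$ into $\Gamma_{n}$ (the product of a partial isometry and a contraction is a contraction) gives $T_{n}=D_{\Gamma_{1}^{\ast}}\cdots D_{\Gamma_{n-1}^{\ast}}\tilde\Gamma_{n}$, which closes the induction. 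The converse direction is the same telescoping run backwards: given any contractions $\Gamma_{1},\dots,\Gamma_{n}$, the identities $D_{\Gamma_{k}^{\ast}}^{2}=I-\Gamma_{k}\Gamma_{k}^{\ast}$ collapse $I-T^{n}(T^{n})^{\ast}$ into a manifestly positive expression, so $\norm{T^{n}}\le 1$.

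The delicate point — the step I expect to cost the most care — is the bookkeeping of domains and ranges. A priori $D_{(T^{n-1})^{\ast}}$ acts on all of $\mathcal{H}'$, whereas the composition $D_{\Gamma_{1}^{\ast}}\cdots D_{\Gamma_{n-1}^{\ast}}$ is built from operators each living on the closed range of the preceding defect; the identification above must therefore be made on the correct closed subspaces, and these same subspace restrictions are exactly what force $\Gamma_{n}\in\mathcal{L}(\mathcal{H}_{n},\mathcal{R}(D_{\Gamma_{n-1}^{\ast}}))$ and what deliver the \emph{uniqueness} clause of the statement. Provided one is disciplined about always converting an inequality $X\le Y$ of positive operators into a factorization $X=VY$ with $V$ a partial isometry (Theorem~\ref{theo4} and its corollary) and about restricting every operator to the closure of its range, the argument is a routine induction requiring no analytic input beyond Theorems~\ref{theo4} and~\ref{identification operator square}.
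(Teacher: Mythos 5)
Your argument is correct and is exactly the induction the paper has in mind (its own proof is the single line ``It can be proved straightforwardly by induction''), with the two-block case Theorem~\ref{identification operator square} supplying both the base case and, via the splitting $T^{n}=[\,T^{n-1}\ \ T_{n}\,]$ together with the defect identity $D_{(T^{n-1})^{\ast}}^{2}=D_{\Gamma_{1}^{\ast}}\cdots D_{\Gamma_{n-1}^{\ast}}D_{\Gamma_{n-1}^{\ast}}\cdots D_{\Gamma_{1}^{\ast}}$, the inductive step. Your detour through a partial isometry could even be shortened by comparing $T_{n}T_{n}^{\ast}\le D_{(T^{n-1})^{\ast}}^{2}=PP^{\ast}$ directly with Theorem~\ref{theo4}, but as written the proposal is sound and your flagged range/domain bookkeeping is indeed where the uniqueness clause lives.
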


Furthermore, the defect operators of the whole contraction T are of the the form
{\small 
\[ D_{T}^{2} = 
\begin{pmatrix}
D_{\Gamma_{1}} & 0 & \cdots & 0\\
-\Gamma_{2}^{\ast}\Gamma_{1} & D_{\Gamma_{2}} & \cdots & 0\\
\vdots & \vdots & \ddots & \vdots \\
-\Gamma_{n}^{\ast}D_{\Gamma_{n-1}^{\ast}} \cdots D_{\Gamma_{2}^{\ast}} & -\Gamma_{n}^{\ast}D_{\Gamma_{n-1}^{\ast}} \cdots D_{\Gamma_{3}^{\ast}}\Gamma_{2} & \cdots & D_{\Gamma_{n}}
\end{pmatrix}
\begin{pmatrix}
D_{\Gamma_{1}} & -\Gamma_{1}^{\ast}\Gamma_{2} & \cdots & -\Gamma_{1}^{\ast}D_{\Gamma_{2}^{\ast}} \cdots D_{\Gamma_{n-1}^{\ast}}\Gamma_{n}\\
0 & D_{\Gamma_{2}}  & \cdots & -\Gamma_{2}^{\ast}D_{\Gamma_{3}^{\ast}} \cdots D_{\Gamma_{n-1}^{\ast}}\Gamma_{n}\\
\vdots & \vdots & \ddots & \vdots \\
0 & 0 & \cdots & D_{\Gamma_{n}}
\end{pmatrix}
\]
}
and \[ D_{T^{\ast}}^{2}  = D_{\Gamma_{1}^{\ast}} \cdots D_{\Gamma_{n}^{\ast}}D_{\Gamma_{n}^{\ast}} \cdots D_{\Gamma_{1}^{\ast}} \]
\begin{proof}
It can be proved straightforwardly by induction.
\end{proof}
This construction permits to understand the apparition of the operators $\alpha$ and $\beta$ in the publications of Constantinescu which are used to identify the defect space of the components (the underlying contractions of a row contraction) of a row contraction with the defect space of the row contraction itself.  Same results are readily obtained for column contraction of the form $T = \begin{pmatrix}
T_{1}\\ \vdots \\ T_{2}
\end{pmatrix} $.


\section{Geodesic equation in the space of curve $ \mathcal{M} $}\label{AnnexB}

To have a complete insight on the geodesic equation, we give the proof for a more general case that arise when considering the SRV and not only the TSRV function of a cuve, that is the curve are parametrized by their starting point and their velocity but their starting point are not transported to the identity. 

\begin{thm}
A path of curves  $ [0,1] \ni s \mapsto \left( c(s,0) , q(s,t) \right)$ (t is the parameter of the curve $ c(s,\cdot) $)is a geodesic on $ \mathcal{M} $ if and only if:
\begin{align}
\nabla_{\partial c / \partial s  }c(s,0) + \int_{0}^{1}{ \mathcal{R} \left(  q(s,t), \nabla_{\partial c / \partial s  }q(s,t)  \right)(c(s,0)) dt} & = 0 \quad \forall s \\
\nabla_{\partial c / \partial s  } \left( \nabla_{\partial c / \partial s } q(s,t) \right) (s,t) & = 0 \quad \forall s,t
\end{align}
 \end{thm}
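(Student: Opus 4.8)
The plan is to derive the geodesic equations by computing the first variation of the energy functional of a path of curves with respect to the metric $\overset{\sim}{g}$ on the TSRV representation, and then transporting this back to the space $\mathcal{M}$ via the SRV map $F$. Concretely, I would let $s \mapsto (c(s,0), q(s,t))$ be a path of curves and embed it in a two-parameter family $(a,s) \mapsto (c(a,s,0), q(a,s,t))$ with fixed endpoints in $a$, so that the variation field is $\partial_a$ evaluated at $a=0$. The energy is $E = \tfrac12\int_0^1 g_{c(s)}(\partial c/\partial s, \partial c/\partial s)\,ds$, which by the pullback definition of the metric equals $\tfrac12 \int_0^1 \big[ \inner{\nabla_{\partial c/\partial s} c(s,0)}{\nabla_{\partial c/\partial s} c(s,0)} + \int_0^1 \norm{\nabla_{\partial c/\partial s} q(s,t)}^2 dt \big] ds$. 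I would then differentiate under the integral sign in $a$, push the $\partial_a$ derivative through using metric compatibility of the Levi-Civita connection ($\partial_a \inner{V}{W} = \inner{\nabla_{\partial_a} V}{W} + \inner{V}{\nabla_{\partial_a} W}$), and integrate by parts in $s$ to isolate the variation field.

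The key technical ingredient is the symmetry-of-second-covariant-derivatives identity with curvature correction: for a two-parameter map, $\nabla_{\partial_a}\nabla_{\partial s} V - \nabla_{\partial s}\nabla_{\partial a} V = \mathcal{R}(\partial_a, \partial_s) V$, together with the torsion-free condition $\nabla_{\partial_a} (\partial c/\partial s) = \nabla_{\partial s} (\partial c/\partial a)$. Applying these lets me swap $\nabla_{\partial_a}$ with $\nabla_{\partial c/\partial s}$ at the cost of curvature terms. The $q$-part of the integrand, after integration by parts in $s$ and using that the variation of $q$ vanishes at the endpoints, yields a term proportional to $\nabla_{\partial c/\partial s}(\nabla_{\partial c/\partial s} q(s,t))$ paired against the variation of $q$; setting this to zero for all admissible variations gives the second equation $\nabla_{\partial c/\partial s}(\nabla_{\partial c/\partial s} q(s,t)) = 0$. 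The starting-point part, where the variation of $c(s,0)$ is \emph{not} independent of the variation of $q$ — because $q$ is obtained from $c$ by the SRV map, whose derivative involves $c(s,0)$ through the (non-transported) parallel transport to the basepoint — is what couples the two and produces the curvature integral $\int_0^1 \mathcal{R}(q(s,t), \nabla_{\partial c/\partial s} q(s,t))(c(s,0))\,dt$ in the first equation. One must carefully track how $T_c F$ relates a variation $f$ of the curve to a variation of the pair $(c(0), q)$; this is exactly the computation alluded to in the main text where $g_c(u,v) = \overset{\sim}{g}_{F(c)}(T_c F(u), T_c F(v))$.

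The main obstacle I anticipate is bookkeeping the coupling between the basepoint variation and the velocity variation correctly — i.e.\ computing $dF$ precisely and not losing or double-counting the curvature term that arises when parallel transport along the (varying) curve is differentiated in the variation direction. Getting the coefficient and the argument order of $\mathcal{R}(q, \nabla_{\partial c/\partial s} q)(c(s,0))$ right requires being scrupulous about which connection acts on what, and about the antisymmetry of $\mathcal{R}$ in its first two slots. Once $dF$ is pinned down, the remaining steps — differentiation under the integral, metric compatibility, one integration by parts in $s$, and invoking the fundamental lemma of the calculus of variations separately for the free variation of $c(s,0)$ and the free variation of $q(s,\cdot)$ — are routine. (For the TSRV case stated in the body of the paper, the basepoint is the identity for all curves in the path, so $\nabla_{\partial c/\partial s} c(s,0) = 0$ and the curvature integral drops, leaving only the second equation, which is why the theorem in Section~\ref{section:2} has the simpler form.)
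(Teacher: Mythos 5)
Your proposal follows essentially the same route as the paper's Annex~\ref{AnnexB} proof: first variation of the energy of a two-parameter family with fixed endpoints, metric compatibility of the connection, the commutator identity $\nabla_{\partial_\tau}\nabla_{\partial_s} q - \nabla_{\partial_s}\nabla_{\partial_\tau} q = \mathcal{R}(\partial_\tau c, \partial_s c)\,q$ together with torsion-freeness, one integration by parts in $s$, the curvature symmetry to move the base-point variation out of the curvature term, and the fundamental lemma applied separately to the variations of $c(s,0)$ and of $q$. The only difference is one of framing --- the paper works directly in SRV coordinates and treats $\partial_\tau c(s,0)$ and $\nabla_{\partial_\tau} q$ as free variations rather than tracking $dF$ explicitly --- but the curvature integral arises from the same commutator in both accounts, and your closing remark that the TSRV case kills the first equation matches the paper's own concluding observation.
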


Similarly to \cite{brigant_computing_2016} and \cite{zhang_video_based_2015}, we consider a variation of the path $ s \mapsto c(s,0), q(s,t) $ starting and ending at the same points, we denote $ \left\{ \left( c(s,0,\tau) ,q(s,t,\tau) \right) \right\} $. In figure (\ref{geodesic variations}), to get a clear picture,  we have represented a variation of a path of curves with fixed starting and ending points. Though similar, the situation here is a bit different because of the representation of the curve through its SRV function, which we can hardly represent. But the process remains similar. We emphasize the subtle difference with \cite{brigant_computing_2016}. Here, we work directly in the tangent space representation, via the SRV representation, and not with "the whole family" of curves $ c(s, t, \tau) $. 
\begin{figure}
\label{path of curves}
\center
\includegraphics[width = 10cm]{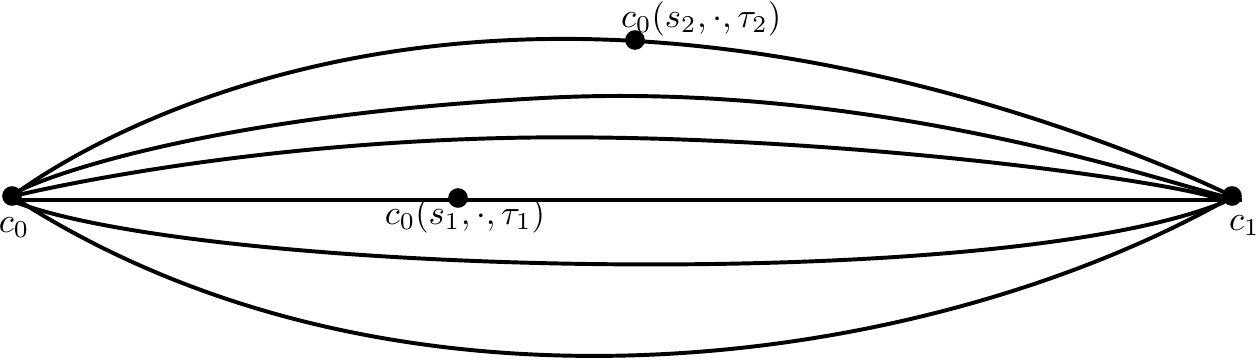}
\caption{we consider a beam of curves, which consists in a slight modification of the geodesic. The different curves are indexed by $\tau$. The idea is to find which of these curves give the minimal energy to go from $ c_{0} $ to $ c_{1} $}
\label{geodesic variations}
\end{figure}
We denote $ \partial_{\tau}c(s,0, \tau) =  \dfrac{ \partial c(s,0,\tau) }{ \partial \tau } $, and similarly for $ \partial_{s}c(s,0,\tau) $ and $ \partial_{\tau}c(s,0,\tau) $. The energy of the path indexed by $ \tau $ is 
\[ 
E(\tau) =\dfrac{1}{2} \int_{0}^{1}{ \inner{\partial_{s} c(s,0,\tau)}{ \partial_{s} c(s,0,\tau )}  + \inner{\nabla_{\partial c / \partial s}q(s,t,\tau)}{ \nabla_{\partial c / \partial s}q(s,t,\tau)}  ds}.
 \]
Recall that the derivative of the inner product is given by $ \dfrac{d}{dx}\inner{f(x)}{f(x)} = 2* \inner{f(x)}{ \dfrac{df}{dx} }$. Then
\[ 
E'(0) = \int_{0}^{1}{ \inner{ \nabla_{\partial c / \partial \tau} \dfrac{\partial c}{\partial s}(s,0,0) }{ \dfrac{\partial c}{\partial s}(s,0,0)} + \inner{ \nabla_{\partial c / \partial \tau} \nabla_{\partial c / \partial s} q(s,t,0) }{ \nabla_{\partial c / \partial s}q(s,t,0)} ds }
 \]
with $ \nabla_{\partial c / \partial s} \left(   \partial_{\tau}c(s,0, \tau) \right)  = \nabla_{\partial c / \partial \tau} \left(  \partial_{s}c (s,0,\tau) \right)$  and owing to the curvature tensor
\newline
 $ \mathcal{R}\left( \partial_{\tau}c(s,0,\tau), \partial_{s}c(s,0, \tau) \right) (q(s,t,\tau) = \nabla_{\partial c / \partial \tau}\nabla_{\partial c / \partial s}(q(s,t,\tau)) - \nabla_{\partial c / \partial s}\nabla_{\partial c / \partial \tau}(q(s,t,\tau)) $
we have
\[ 
\begin{split}
E'(0) & = \int_{0}^{1} \inner{ \nabla_{\partial_{s} c} \partial_{\tau}c(s,0,\tau) }{ \partial_{s}c(s,0,\tau) } +
 \inner{ \mathcal{R} \left( \partial_{\tau}c(s,0,\tau), \partial_{s}c(s,0,\tau) \right) q(s,t,\tau) }{ \nabla_{\partial_{s}}q(s,t,\tau) } \\
&  +\inner{ \nabla_{\partial_{s} c} \nabla_{\partial_{\tau} c } q(s,t,0) }{ \nabla_{\partial_{s} c}q(s,t,0)} ds
\end{split}.
 \]
Integrating by parts now, allows to have

\begin{align*}
\int_{0}^{1}{ \inner{ \nabla_{\partial_{\tau}c} \partial_{s}c(s,0,\tau) }{\partial_{s}c(s,0,\tau)}ds } & = - \int_{0}^{1}{ \inner{ \nabla_{\partial_{s}c} \partial_{s}c(s,0,\tau) }{\partial_{\tau}c(s,0,\tau)}ds }\\
\int_{0}^{1}{ \inner{ \nabla_{\partial_{s} c} \nabla_{\partial_{\tau}c} (q(s,t,\tau))}{\nabla_{\partial_{s}}q (s,t,\tau) } } & =  - \int_{0}^{1}{ \inner{ \nabla_{\partial_{s} c} \nabla_{\partial_{s}c} (q(s,t,\tau))}{\nabla_{\partial_{\tau}}q (s,t,\tau) } }
\end{align*}

which yields to:

 \[ 
\begin{split}
E'(0) & = \int_{0}^{1}  (- \inner{ \nabla_{\partial_{s} c} \partial_{s}c(s,0,\tau) }{ \partial_{\tau}c(s,0,\tau) }) +
 \inner{ \mathcal{R} \left( \partial_{\tau}c(s,0,\tau), \partial_{s}c(s,0,\tau) \right) q(s,t,\tau) }{ \nabla_{\partial_{s}}q(s,t,\tau) } \\
&  +( -\inner{ \nabla_{\partial_{s} c } \nabla_{\partial_{s} c } q(s,t,0) }{ \nabla_{\partial_{\tau} c }q(s,t,0)}) ds
\end{split}
 \]

For any vector fields $ X,Y,Z,W $, $ \inner{ \mathcal{R}(X,Y)Z }{W} = - \inner{ \mathcal{R}(W) }{Z} $, consequently we obtained 

\[ 
\begin{split}
E'(0) & = - \int_{0}^{1} \inner{ \nabla_{\partial_{s} c} \partial_{\tau}c(s,0,\tau) }{ \partial_{s}c(s,0,\tau) } +
 \inner{ \mathcal{R} \left( q(s,t,\tau), \nabla_{\partial_{s}}q(s,t,\tau) \right) ( \partial_{s}c(s,0,\tau) ) }{ \partial_{\tau}c(s,0,\tau } \\
&  +\inner{ \nabla_{\partial c / \partial s} \nabla_{\partial c / \partial \tau} q(s,t,0) }{ \nabla_{\partial c / \partial s}q(s,t,0)} ds
\end{split}
 \]
Geodesic corresponds to minimal energy. It means that every other path that starts and ends at the same points should require more energy to travel than the geodesic. We then have to solve $ E'(0) = 0 $ for every $ \partial_{\tau}c(s,0,\tau) $ and every $ \nabla_{\partial_{\tau}}(q(s,t,\tau)) $. This gives the result. \\
Now when the framework is given by the TSRV and not by the SRV, only the second part of the geodesic equation remains due to the fixed starting point which corresponds to the identity element. This simplifies a lot the equation, even though the derivation is the same.

\bibliography{Dilation2}
\bibliographystyle{plain}

\end{document}